\pgfplotsset{compat=1.3}
\newcommand\Eps{\frac{1}{2}}
\newcommand\GammaA{\ensuremath{\Gamma}}
\newcommand\GammaE{\ensuremath{\Gamma'}}
\newcommand\BC{\texttt{BC}\xspace}
\newcommand\WCI{\texttt{WC}$_I$\xspace}
\newcommand\WCGamma{\texttt{WC}$_\Gamma$\xspace}
\newcommand\RegretI{\texttt{Regret}$_I$\xspace}
\newcommand\RegretGamma{\texttt{Regret}$_\Gamma$\xspace}
\newcommand\RR[1]{\texttt{BR}(#1)\xspace}
\newlength{\ThreeColumnWidthAlpha}
\newlength{\ThreeColumnWidthBeta}
\newlength{\ThreeColumnWidthGamma}
\newtheorem{theorem}{Theorem}
\newtheorem{observation}[theorem]{Observation}
\newtheorem{lemma}[theorem]{Lemma}
\begin{document}

\newcommand{\X}{{\mathcal{X}}}
\newcommand{\cU}{{\mathcal{U}}}
\newcommand{\cI}{{\mathcal{I}}}
\newcommand{\cC}{{\mathcal{C}}}
\newcommand{\amax}{{a_{\max}}}
\newcommand{\amin}{{a_{\min}}}

\title{Combinatorial Optimization Problems with Balanced Regret}

\author{Marc Goerigk}
\author{Michael Hartisch\footnote{Corresponding author. Email: \texttt{michael.hartisch@uni-siegen.de}}}

\affil{Network and Data Science Management, University of Siegen,\\Unteres Schlo{\ss}~3, 57072 Siegen, Germany}

\date{}

\maketitle

\begin{abstract}
For decision making under uncertainty, min-max regret has been established as a popular methodology to find robust solutions. In this approach, we compare the performance of our solution against the best possible performance had we known the true scenario in advance. We introduce a generalization of this setting which allows us to compare against solutions that are also affected by uncertainty, which we call balanced regret. Using budgeted uncertainty sets, this allows for a wider range of possible alternatives the decision maker may choose from. We analyze this approach for general combinatorial problems, providing an iterative solution method and insights into solution properties. We then consider a type of selection problem in more detail and show that, while the classic regret setting with budgeted uncertainty sets can be solved in polynomial time, the balanced regret problem becomes NP-hard. In computational experiments using random and real-world data, we show that balanced regret solutions provide a useful trade-off for the performance in classic performance measures.
\end{abstract}

\noindent\textbf{Keywords:} min-max regret; robust optimization; budgeted uncertainty; selection problems; decision criteria


\section{Introduction}

\subsection{Motivation}

When making decisions, we usually do not have full knowledge of all aspects of the problem or consequences of the alternatives we choose from. We should therefore take uncertainty into account in the decision making process. Robust optimization \cite{gorissen2015practical,goerigk2016algorithm} is a leading paradigm to do so, encompassing different decision criterions. These include the min-max approach, where we find a decision that optimizes its worst-case performance, and the min-max regret setting, where we minimize the largest regret over all scenarios, given as the difference to the best possible objective value we could have achieved had we known the true parameters in advance. It is well-known that there is no decision approach that can fulfill a complete set of reasonable axioms of decision making simultaneously \cite{french1986decision}, i.e., from a theoretical perspective, no such approach can be superior over all others.
Hence, both min-max and min-max regret approaches have seen a wide range of research interest over the last decades, see, e.g. the surveys \cite{kouvelis2013robust,Aissi2009427,kasperski2016robust}.

Optimization problems under uncertainty are typically harder to solve than their counterparts without uncertainty, the so-called nominal problems. For most combinatorial optimization problems that can be solved in polynomial time if all parameters are known exactly, the min-max regret problem becomes NP-hard. This has led to the development of approximation methods \cite{Aissi2005}. In particular the midpoint heuristic has seen much interest, where we simply solve a nominal problem with respect to an average scenario, see \cite{Kasperski2006177,Conde2012452}, and \cite{chassein2015new,gilbert2017double,chassein2018scenario} for further developments.

Due to the computational challenge that min-max regret problems pose, algorithms have been developed for specific problems, such as knapsack \cite{furini2015heuristic}, spanning tree \cite{kasperski2012tabu}, network optimization \cite{averbakh2004interval}, assignment \cite{wu2018exact}, shortest path \cite{montemanni2004exact}, or the travelling salesman problem \cite{montemanni2007robust}.

One type of problem that has seen particular attention is the so-called selection problem \cite{kasperski2017robust,lachmann2021linear}. Depending on the variant we consider, one or multiple disjunct sets of items are given, and we want to find a subset of specific cardinality of each of these sets, such that item costs are minimized. Note that if all item costs are known, this problem can be solved in linear time \cite{cormen2009introduction}. So far, this type of problem seems to be the only case of min-max regret that remains solvable in polynomial time, see \cite{averbakh2001complexity,conde2004improved}, and also \cite{deineko2013complexity,dolgui2012min}. Selection problems arise as special cases in more complex combinatorial optimization problems, such as knapsack, assignment, scheduling, spanning tree or shortest path problems, see \cite{kasperski2013approximating}.

To define a min-max regret problem, we need to determine the set of scenarios against which we wish to protect. In most cases, this uncertainty set is either a list of scenarios (discrete uncertainty), or consists of all scenarios that adhere to lower and upper bounds for each item (interval uncertainty), see \cite{Aissi2009427}. Rarely, other uncertainty sets are considered, such as ellipsoidal uncertainty \cite{chassein2017minmax} or budgeted uncertainty \cite{poursoltani2019adjustable}, due to the resulting problem complexity. At the same time, interval uncertainty sets are by nature very conservative: a well-known result states that to calculate the regret of a fixed combinatorial solution, we need to consider the scenario where every item we want to pack is as expensive as possible, and every item we do not want to pack is as cheap as possible. This requires us to search for alternative models that still remain tractable. One such approach allows the decision maker to actively alter the uncertainty set \cite{conde2017minmax,conde2019robust}. 

In this paper we propose a different approach. We consider min-max regret problems with budgeted uncertainty as introduced by \cite{bertsimas2003robust,bertsimas2004price}, i.e., we assume that not all costs can deviate from their lower bound simultaneously; instead, the amount of deviation is controlled by a parameter. Furthermore, we introduce the concept of balanced regret: After we fix a solution to the decision problem at hand and an adversarial player chooses a scenario from the uncertainty set as well as a solution against which we need to compare, we have the opportunity to \emph{also} modify item costs in a way that items the adversary packed can become more expensive. That is, instead of comparing ourselves against a nominal solution (as is the case in classic min-max regret), we compare ourselves against a robust solution, which is also affected by uncertainty, thus levelling the playing field between decision maker and adversary.

The resulting problem, which we call ``balanced regret'', has therefore an additional stage in the decision making process. It can be considered as a special type of two-stage or adjustable robust problem (see the survey \cite{yanikouglu2019survey}). While two-stage regret problems have been considered before \cite{goerigk2020combinatorial,poursoltani2019adjustable}, they do not exhibit the specific structure of balanced regret problems as considered here. We give a formal definition of the balanced regret setting and point out our contributions in the following.

\subsection{Formal Problem Definition}

Let a nominal combinatorial optimization problem be given as
\[
\min_{\pmb{x}\in\X} \pmb{c}^t \pmb{x}\, ,
\]
where $\X\subseteq\{0,1\}^n$ denotes the set of feasible solutions, and $n$ is the problem dimension. In the following, we use the notation $[n] = \{1,2,\ldots,n\}$. Following the robust optimization paradigm, we assume that the cost vector $\pmb{c}$ is uncertain, but stemming from a known set $\cU$ of possible scenarios. The (min-max) regret problem is then to solve
\[ 
\min_{\pmb{x}\in\X} \max_{\pmb{c}\in\cU} \left( \pmb{c}^t \pmb{x} - \min_{\pmb{y}\in\X}\pmb{c}^t\pmb{y}\right) = 
\min_{\pmb{x}\in\X} \max_{\pmb{c}\in\cU,\pmb{y}\in\X} \pmb{c}^t (\pmb{x}-\pmb{y})\, ,
\]
while the min-max problem is defined as
\[ \min_{\pmb{x}\in\X} \max_{\pmb{c}\in\cU} \pmb{c}^t \pmb{x}\, . \]
In this paper, we consider budgeted uncertainty sets of the form
\[ \cU(\Gamma) = \{ \pmb{c}\in\mathbb{R}^n: c_i = \hat{c}_i + d_i \delta_i\ \forall i\in[n],\ \pmb{\delta}\in\Delta(\Gamma)\} \]
where 
\[ \Delta(\Gamma) = \{ \pmb{\delta}\in\{0,1\}^n: \sum_{i\in[n]} \delta_i \le \Gamma \}\, , \]
i.e., each item $i$ has a base cost $\hat{c}_i\geq 0$, and a deviation $d_i \geq 0$. Only up to $\Gamma$ items can deviate from their base costs simultaneously.

We introduce an extension of the min-max regret problem, where we allow the decision maker in an additional stage to increase the objective value of the adversary solution as well. That is, the balanced regret problem we consider is defined as follows:
\[ \min_{\pmb{x}\in\X}\ \max_{\pmb{\delta}\in\Delta(\Gamma), \pmb{y}\in\X}\  \min_{\pmb{\epsilon}\in\Delta(\Gamma')} \sum_{i\in[n]} (\hat{c}_i + d_i\delta_i + d_i\epsilon_i) (x_i - y_i) \]
Note that if $\Gamma'=0$, we recover a classic regret problem with budgeted uncertainty. In particular, if $\Gamma=n$ and  $\Gamma'=0$, this becomes the well-known regret problem with an interval uncertainty set. The problem we define is hence a generalization of the classic approach; this means that all hardness results extend to our setting.

As a subproblem, consider the case that a solution $\pmb{x}\in\X$ is given, and we want to calculate its objective value. We refer to this as the adversarial problem, which is given as
\[ \max_{\pmb{\delta}\in\Delta(\Gamma), \pmb{y}\in\X}\ \min_{\pmb{\epsilon}\in\Delta(\Gamma')} \sum_{i\in[n]} (\hat{c}_i + d_i\delta_i + d_i\epsilon_i) (x_i - y_i)\, .\]
If $\pmb{x}\in\X$, $\pmb{\delta}\in\Delta(\Gamma)$, and $\pmb{y}\in\X$ are fixed, what remains is called the balancing problem, given as
\[ \min_{\pmb{\epsilon}\in\Delta(\Gamma')} \sum_{i\in[n]} (\hat{c}_i + d_i\delta_i + d_i\epsilon_i) (x_i - y_i)\, . \]

There are different ways to interpret the balanced regret setting. The uncertainty affects both $\pmb{x}$ and $\pmb{y}$. Hence, both $\pmb{\delta}$ and $\pmb{\epsilon}$ are controlled by "nature", once working against $\pmb{x}$, and once against $\pmb{y}$. This means that we want to obtain a solution that minimizes the regret compared to a robust (instead of nominal) solution. Note that for fixed $\pmb{x}$ and $\pmb{\delta}$, the problem of finding the optimal $\pmb{y}$ is given by
\[ \min_{\pmb{y} \in \mathcal{X}} \left( \sum_{i \in [n]}  (\hat{c}_i + d_i \delta_i) y_i  + \max_{\pmb{\epsilon} \in \Delta (\Gamma')}  \sum_{i\in [n]} \epsilon_i\hat{d}_i y_i\right) \]
with
\[ \hat{d}_i=\begin{cases} 
d_i, & \text{ if $x_i=0$}\\
0, &\text{ if $x_i=1$}
\end{cases}\ \forall i\in[n]\, .
\]
Hence, the adversarial solution is a robust solution with respect to the updated costs.

Another way to consider this setting is that we want to find out what a best general solution $\pmb{x}$ would look like that is good for any scenario $\pmb{\delta}$. We compare it to solutions $\pmb{y}$ that are specialized to scenario $\pmb{\delta}$. However, after having chosen $\pmb{y}$ this solution must also be valid in future scenarios $\pmb{\epsilon}$.

While the balanced regret approach can be applied to any combinatorial problem, we also study the multi-representative selection problem in more detail, where $\X = \{ \pmb{x}\in\{0,1\}^n : \sum_{i\in T_\ell} x_i = p_\ell \ \forall \ell\in[L]\}$ for a partition $T_1 \cup T_2 \cup \ldots T_L = [n]$ and integers $p_\ell \le |T_\ell|$. If $L=1$, then this is also known simply as the selection problem. Selection problems play a major role in the analysis of robust discrete optimization, as they tend to lie on the boundary between NP-hard and polynomially solvable problems; the min-max regret selection problem with interval uncertainty is known to belong to the latter \cite{averbakh2001complexity}. 

To give an additional intuition for the selection problem with balanced regret, consider two persons who may choose $p$ from $n$ prizes in a lottery. While both players have an estimate of the values of these prizes, they are subjective and can be reduced when bad-mouthed by the other player. Player 1 has to choose first, and player 2 can observe this choice. Both players then reduce the values of some of the prizes so that their own choice compares as favorably as possible to the other player's choice. Which prizes should player 1 choose?

We present two numerical examples to illustrate the balanced regret approach.
As our first example, we consider the selection problem with $n=5$, $p=2$, $\Gamma=\Gamma'=1$, and costs given in Table~\ref{tab:second}.

\begin{table}[h!tb]
\centering
\caption{Item costs in the first example.\label{tab:second}}
\begin{tabular}{lrrrrr}
$i$&1&2&3&4&5\\\midrule
$\hat{c}_i$&8&5&2&17&15\\
$d_i$&9&14&15&12&1
\end{tabular}
\end{table}

An optimal balanced regret solution is to select items $1$ and $3$. An optimal adversarial decision is then to increase the cost of item $3$, while selecting items $1$ and $5$. In the balancing stage, an optimal decision is to increase the cost of item $5$. Hence, the overall costs for the decision maker are $\hat{c}_1 + \hat{c}_3 + d_3 = 8+2+15=25$ while the adversarial costs are $\hat{c}_1 + \hat{c}_5 + d_5 = 8+15+1=24$, resulting in the overall optimal balanced regret of $1$. This shows that the objective value of a balanced regret problem can be non-zero, even if $\Gamma=\Gamma'$, i.e., there is an advantage for the adversarial player by choosing her solution with the knowledge of the decision maker's solution.

As our second example, 
consider a selection problem with $n=6$ items of which $p=3$ have to be selected. We assume that $\Gamma=2$. Item costs are presented in Table~\ref{tab:one}.

\begin{table}[h!tb]
\centering
\caption{Item costs in the second example.\label{tab:one}}
\begin{tabular}{lllllll}
$i$&1&2&3&4&5&6\\\midrule
$\hat{c}_i$&3&2&1&4&4&4\\
$d_i$&2&4&4&0&0&0
\end{tabular}
\end{table}

An optimal worst-case (min-max) solution is to select the items $\{4,5,6\}$, both with respect to interval uncertainty as well as to budgeted uncertainty. An optimal solution with minimal regret with respect to $\cU(\Gamma)$ is to select items $\{1,2,3\}$. For the balanced regret problem with $\Gamma'=1$, it is optimal to select items $\{3, 4, 5\}$. In Table~\ref{tab:objone}, these three solutions are evaluated with respect to their min-max worst-case objective (WC), their regret (R) and their balanced regret (BR) value.

\begin{table}[h!tb]
\centering
\caption{Objective values in the second example.\label{tab:objone}}
\begin{tabular}{lrrrr}
 & items & WC-obj & R-obj & BR-obj \\\midrule
WC-solution & $\{4,5,6\}$ & 12 & 6 & 2\\
R-solution & $\{1,2,3\}$ & 14 & 3 & 3\\
BR-solution & $\{3,4,5\}$ & 13 & 4 & 1
\end{tabular}
\end{table}

In this example, we note that the balanced regret solution gives a trade-off between the worst-case and the classic regret criterion.

Having introduced the balanced regret approach, the remainder of this paper makes the following analysis. In Section~\ref{sec:general}, we study general properties of the problem. We show that for $\Gamma'=n$, any optimal min-max solution is also optimal for balanced regret. We give a mixed-integer programming formulation for the adversarial problem, which can be used to solve the balanced regret problem in an iterative scenario generation procedure. We also note a quantified programming formulation for the problem. In Section~\ref{sec:selection}, we focus on the multi-representative selection problem in combination with balanced regret. We prove that already the two special cases of selection (where $L=1$) and of representative selection (where $p_\ell = 1$ for all $\ell\in[L]$) are NP-hard, while the adversarial problem remains solvable in polynomial time. We further show that if either $\hat{\pmb{c}}$ or $\pmb{d}$ is a constant value, the balanced regret problem can be solved efficiently. We are also able to determine a solution with objective value zero in polynomial time, if it exists. Finally, the case where $\Gamma'=0$, i.e., the classic min-max regret criterion with budgeted uncertainty, is shown to be solvable in polynomial time. We present computational experiments using selection problems, knapsack problems, and a real-world shortest path problem in Section~\ref{sec:experiments}. Our numerical data suggest that the solutions found with the balanced regret approach give a reasonable trade-off between classic regret solutions and worst-case solutions, thus extending the pool of alternatives for a decision maker. We conclude our paper and point out further research questions in Section~\ref{sec:conclusions}.

\section{Problem Properties and Solution Methods}
\label{sec:general}

We consider the general case where $\mathcal{X} \subseteq \{0,1\}^n$ denotes the set of feasible solutions for some combinatorial optimization problem. We assume that $\X$ can be described as the intersection of a polyhedron with $\{0,1\}^n$, where the polyhedron is not necessarily integral.

\subsection{Problem Properties}

We first study basic properties of the balanced regret problem, beginning with the adversarial problem. To this end, we first consider the balancing problem for fixed $\pmb{x}$, $\pmb{\delta}$ and $\pmb{y}$, which is given as 
\begin{subequations}
\label{eq:revenge}
\begin{align}
\min\ & \sum_{i\in[n]} (\hat{c}_i + d_i \delta_i + d_i \epsilon_i) (x_i - y_i) \\
\text{s.t. } & \sum_{i\in[n]} \epsilon_i \le \Gamma' \\
& \epsilon_i \in\{0,1\} & \forall i\in[n] 
\end{align}
\end{subequations}
We first note the following.
\begin{lemma} \label{lemma:NoEpsilonIfX}
There is an optimal solution to the balancing problem where $\epsilon_i + x_i \le 1$, i.e., no item is attacked by the decision maker that is also packed by the decision maker.
\end{lemma}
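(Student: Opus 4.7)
The plan is a straightforward exchange argument: start from any optimal $\pmb{\epsilon}^*$ for the balancing problem~\eqref{eq:revenge}, and flip to $0$ every coordinate $\epsilon_i^*$ for which $x_i=1$. I would show that this operation preserves feasibility and does not increase the objective, so that the resulting vector is an optimal solution fulfilling $\epsilon_i + x_i \le 1$ for all $i$.

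First I would isolate the dependence of the objective on $\pmb{\epsilon}$: the term involving $\epsilon_i$ in the objective of~\eqref{eq:revenge} is $d_i \epsilon_i (x_i - y_i)$, while the rest, $\sum_{i\in[n]} (\hat{c}_i + d_i\delta_i)(x_i-y_i)$, does not depend on $\pmb{\epsilon}$. Using $d_i \ge 0$ and $y_i\in\{0,1\}$, whenever $x_i = 1$ the coefficient $d_i(x_i-y_i) = d_i(1-y_i)$ is nonnegative. Hence setting $\epsilon_i = 1$ at such an index can only increase (or leave unchanged) the objective compared with $\epsilon_i = 0$.

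Next I would formalize the swap. Given an optimal $\pmb{\epsilon}^*$, define $\tilde{\pmb{\epsilon}}$ by $\tilde{\epsilon}_i = 0$ whenever $x_i = 1$ and $\tilde{\epsilon}_i = \epsilon_i^*$ otherwise. Then $\tilde{\epsilon}_i \le \epsilon_i^*$ for every $i$, so $\sum_i \tilde{\epsilon}_i \le \sum_i \epsilon_i^* \le \Gamma'$ and integrality is trivially preserved; thus $\tilde{\pmb{\epsilon}}$ is feasible. Comparing objectives term by term, for every index $i$ where $x_i=1$ and $\epsilon_i^*=1$ we drop a nonnegative contribution $d_i(1-y_i)$, while all other terms remain unchanged. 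So the objective at $\tilde{\pmb{\epsilon}}$ is at most the optimal value, and equality must hold, establishing the claim.

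I do not anticipate a real obstacle. The only subtle point is confirming that the sign of $d_i(x_i - y_i)$ is indeed nonnegative precisely when $x_i = 1$, and that the budget constraint cannot be violated by flipping $\epsilon_i^*$ from $1$ to $0$. Notably, the argument uses neither the nonnegativity of $\hat{c}_i$ nor any structural property of $\X$ beyond $\X\subseteq\{0,1\}^n$, so it applies verbatim in the general combinatorial setting of this section.
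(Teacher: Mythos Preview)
Your proposal is correct and follows essentially the same exchange argument as the paper: both take an optimal $\pmb{\epsilon}$, zero out any coordinate with $x_i=1$, observe that feasibility is preserved (the budget can only loosen) and that the dropped contribution $d_i(1-y_i)\ge 0$ makes the objective non-increasing, whence optimality. Your write-up is simply a more detailed and careful version of the paper's one-paragraph proof.
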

\begin{proof}
Let an optimal solution to problem~\eqref{eq:revenge} be given where $x_i=\epsilon_i=1$ for some $i\in[n]$. Setting $\epsilon_i=0$ then results in a new solution that is feasible as well. Because $d_i(x_i-y_i) \ge 0$, the new solution has an objective value that is not less than the old solution and is hence optimal as well.
\end{proof}
Hence, an optimal solution to this problem is to sort items by cost coefficients $d_i(x_i-y_i)$ and to pack up to $\Gamma'$ many items with smallest non-positive costs. This means we can consider the linear relaxation of problem~\eqref{eq:revenge} to find an optimal integral solution. By dualizing the balancing problem, a compact non-linear formulation of the adversarial problem can be derived as follows.
\begin{subequations}
\label{eq:revenge2}
\begin{align}
\max\ & \sum_{i\in[n]} (\hat{c}_i + d_i\delta_i)(x_i - y_i) - \Gamma' s - \sum_{i\in[n]} t_i \\
\text{s.t. } &  s + t_i \ge d_i (y_i - x_i) & \forall i\in[n] \\
& \sum_{i\in[n]} \delta_i \le \Gamma \\
& \pmb{y}\in \X \\
& \delta_i \in\{0,1\} & \forall i\in[n] \\
& s \ge 0 \\
& t_i \ge 0 & \forall i\in[n]
\end{align}
\end{subequations}

\begin{lemma}\label{lemma:NoDeltaIfY}
There is an optimal solution to the adversarial problem where $y_i + \delta_i \le 1$, i.e., no item is attacked by the adversary that is also packed by the adversary.
\end{lemma}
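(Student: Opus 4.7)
The plan is to mirror the swap argument used for Lemma~\ref{lemma:NoEpsilonIfX}. I would take any optimal pair $(\pmb{\delta}, \pmb{y})$ of the adversarial problem in which some index $i$ satisfies $y_i = \delta_i = 1$, and construct a modified attack $\pmb{\delta}'$ by setting $\delta'_i = 0$ and $\delta'_j = \delta_j$ for $j \ne i$. Feasibility of $\pmb{\delta}'$ with respect to $\Delta(\Gamma)$ is immediate, since $\sum_{j\in[n]} \delta'_j \le \sum_{j\in[n]} \delta_j \le \Gamma$.

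The structural observation driving the argument is that in the adversarial objective
\[
\sum_{i\in[n]}\hat{c}_i(x_i-y_i) \;+\; \sum_{i\in[n]} d_i\delta_i(x_i-y_i) \;+\; \sum_{i\in[n]} d_i\epsilon_i(x_i-y_i),
\]
the variables $\pmb{\delta}$ and $\pmb{\epsilon}$ enter in two completely separate summations. Consequently, the inner minimum $\min_{\pmb{\epsilon}\in\Delta(\Gamma')}\sum_{i\in[n]} d_i\epsilon_i(x_i-y_i)$ depends only on $\pmb{x}$ and $\pmb{y}$, not on $\pmb{\delta}$. Thus replacing $\pmb{\delta}$ by $\pmb{\delta}'$ changes the overall objective by exactly $-d_i(x_i - y_i) = -d_i(x_i - 1) \ge 0$, using $d_i \ge 0$ and $x_i \in \{0,1\}$. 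So the modified solution $(\pmb{\delta}', \pmb{y})$ has an objective value no smaller than $(\pmb{\delta}, \pmb{y})$, and is therefore optimal as well.

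Iterating this swap for every index where $y_i = \delta_i = 1$ (each swap is independent of the others, since each only touches its own coordinate) yields an optimal solution satisfying $y_i + \delta_i \le 1$ for all $i\in[n]$. The only substantive point to verify is the separability of the objective into $\pmb{\delta}$- and $\pmb{\epsilon}$-dependent parts, which guarantees that the inner balancing subproblem is untouched by the modification; beyond this, no real obstacle arises.
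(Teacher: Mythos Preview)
Your proposal is correct and follows essentially the same swap argument as the paper. The only minor difference is that the paper applies the swap to the dualized formulation~\eqref{eq:revenge2}, where the $\pmb{\epsilon}$ variables have already been eliminated, whereas you work directly with the max--min formulation and therefore need the additional (correct) observation that the inner balancing minimum depends only on $\pmb{x}$ and $\pmb{y}$, not on $\pmb{\delta}$.
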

\begin{proof}
Let an optimal solution to problem~\eqref{eq:revenge2} be given where $y_i=\delta_i=1$ for some $i\in[n]$. Setting $\delta_i=0$ then results in a new solution that is feasible as well. Because $d_i(x_i-y_i) \le 0$, the new solution has an objective value that is not less than the old solution and is hence optimal as well.
\end{proof}

Using this insight, we formulate the following linearized problem version of the adversarial problem.
\begin{subequations}
\label{eq:revenge3}
\begin{align}
\max\ & \sum_{i\in[n]} (\hat{c}_i + d_i\delta_i) x_i - \sum_{i\in[n]} \hat{c}_i y_i - \Gamma' s - \sum_{i\in[n]} t_i \\
\text{s.t. } &  s + t_i \ge d_i (y_i - x_i) & \forall i\in[n] \\
& y_i + \delta_i \le 1 & \forall i\in[n] \\
& \sum_{i\in[n]} \delta_i \le \Gamma \\
& \pmb{y}\in\X \\
& \delta_i \in\{0,1\} & \forall i\in[n] \\
& s \ge 0 \\
& t_i \ge 0 & \forall i\in[n]
\end{align}
\end{subequations}

As we can formulate the adversarial problem as a mixed-integer linear program, we can make the following observation.

\begin{observation}\label{th:advnp}
The adversarial problem is in NP.
\end{observation}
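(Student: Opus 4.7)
The plan is to argue that the decision version of the adversarial problem (given a threshold $K$, is the max-min value at least $K$?) admits a short certificate that is verifiable in polynomial time. The natural certificate is a pair $(\pmb{\delta}, \pmb{y})$ with $\pmb{\delta}\in\Delta(\Gamma)$ and $\pmb{y}\in\X$. Since $\pmb{\delta}\in\{0,1\}^n$ and $\pmb{y}\in\{0,1\}^n$, this certificate has size $O(n)$, plus whatever is needed to certify $\pmb{y}\in\X$ (which we may assume is polynomial, as is standard when $\X$ is described as the intersection of a polyhedron with $\{0,1\}^n$ of polynomial encoding length).

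Given such a certificate, verification reduces to computing the inner balancing problem value, i.e.\ evaluating
\[ \min_{\pmb{\epsilon}\in\Delta(\Gamma')} \sum_{i\in[n]} (\hat{c}_i + d_i\delta_i + d_i\epsilon_i)(x_i - y_i). \]
The first step I would record is that, by Lemma~\ref{lemma:NoEpsilonIfX} and the remark following its proof, this inner problem can be solved in polynomial time simply by sorting the coefficients $d_i(x_i - y_i)$ and setting $\epsilon_i=1$ for up to $\Gamma'$ indices with the smallest strictly negative values. Hence, for any candidate $(\pmb{\delta},\pmb{y})$, the objective value of the adversarial problem at $(\pmb{\delta},\pmb{y})$ can be computed exactly in polynomial time and then compared against $K$.

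An alternative (and essentially equivalent) formulation of the same argument is to invoke the MILP representation~\eqref{eq:revenge3} derived above: it has $O(n)$ variables and constraints (beyond those describing $\X$), so the adversarial problem reduces in polynomial time to a mixed-integer linear program of polynomial size, and MILP decision problems with polynomially bounded integer variables lie in NP. I would state the proof in the first form (certificate plus polynomial-time balancing evaluation) since it is cleaner and does not require discussing bounds on the auxiliary dual variables $s$ and $t_i$.

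The only mild obstacle is making sure that the certificate size and verification time are genuinely polynomial even when $\X$ is a nontrivial combinatorial set; this is handled by the standing assumption in Section~\ref{sec:general} that $\X$ is the intersection of a (polynomially describable) polyhedron with $\{0,1\}^n$, so membership in $\X$ can be verified in polynomial time from the 0/1 vector $\pmb{y}$. With that in place, the observation follows immediately.
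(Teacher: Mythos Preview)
Your proposal is correct and essentially matches the paper's reasoning: the paper simply points to the mixed-integer linear programming formulation~\eqref{eq:revenge3} as the justification, which is exactly your second alternative. Your preferred certificate-based argument (guess $(\pmb{\delta},\pmb{y})$ and solve the balancing problem by sorting) is an equally valid and slightly more explicit route to the same conclusion.
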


Recall that the min-max regret knapsack problem with interval uncertainty is $\Sigma^p_2$-hard \cite{deineko2010pinpointing}. It follows from Observation~\ref{th:advnp} that the balanced regret version remains in $\Sigma^p_2$. This means that in this case, the complexity class does not increase. As we see in Section~\ref{sec:selregrev}, the complexity does increase for multi-representative selection problems.

In the following, we use the notation $[x]_+$ to denote $\max\{0,x\}$.
\begin{lemma}\label{lemma:revenge1}
There is an optimal solution to problem~\eqref{eq:revenge3} with $t_i = [d_i(y_i-x_i)-s]_+$ and $s\in \mathcal{S}=\{0\}\cup\{d_i : i\in[n]\}$.
\end{lemma}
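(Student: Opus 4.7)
The plan is to exploit the fact that problem~\eqref{eq:revenge3} is linear in the continuous variables $s$ and $(t_i)$ for any fixed assignment of $\pmb{x}$, $\pmb{y}$, $\pmb{\delta}$. First, I would take an arbitrary optimal solution and argue the $t_i$ part in isolation: each $t_i$ appears only in its own constraint $s + t_i \ge d_i(y_i - x_i)$ together with $t_i \ge 0$, and it enters the objective with coefficient $-1$. So, fixing everything else, the smallest feasible $t_i$ is
\[ t_i^\star = \max\{0,\; d_i(y_i - x_i) - s\} = [d_i(y_i - x_i) - s]_+, \]
and replacing $t_i$ by $t_i^\star$ cannot decrease the objective. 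This yields an optimal solution with the stated form for $\pmb{t}$.

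Substituting $t_i = [d_i(y_i - x_i) - s]_+$ into the objective and keeping $\pmb{x}$, $\pmb{y}$, $\pmb{\delta}$ fixed, the remaining dependence on $s \ge 0$ is
\[ g(s) = -\Gamma' s - \sum_{i\in[n]} [d_i(y_i - x_i) - s]_+ + \text{const}. \]
I would then argue that $g$ is piecewise linear and concave, so its maximum over $[0,\infty)$ is attained at the left endpoint $s=0$ or at one of the breakpoints of the summands. The term $[d_i(y_i - x_i) - s]_+$ is identically zero for $s \ge 0$ unless $y_i = 1$ and $x_i = 0$, in which case it equals $d_i - s$ on $[0, d_i]$ and $0$ on $[d_i, \infty)$, contributing a single breakpoint at $s = d_i$. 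Concavity follows because the slope of $g$ starts at $-\Gamma' + |\{i : y_i = 1, x_i = 0,\ d_i > 0\}|$ and strictly decreases by one each time $s$ crosses such a breakpoint, eventually reaching $-\Gamma'$.

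Since a concave piecewise linear function on $[0,\infty)$ with finite limit $-\infty$ attains its maximum at $s = 0$ or at a breakpoint, I can replace the given optimal $s$ by one of these values without decreasing the objective, obtaining $s \in \{0\} \cup \{d_i : i\in[n],\ y_i = 1,\ x_i = 0\} \subseteq \mathcal{S}$. The mild obstacle is not a difficulty of principle but one of bookkeeping: making the breakpoint/slope analysis of $g$ precise enough that concavity and the location of maximizers are unambiguous. Everything else is a routine substitution argument.
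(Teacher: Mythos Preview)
Your proposal is correct and follows essentially the same approach as the paper's proof, which also fixes $(\pmb{y},\pmb{\delta})$, sets $t_i$ to the minimal feasible value $[d_i(y_i-x_i)-s]_+$, and then argues that the resulting one-variable problem in $s$ is piecewise linear with breakpoints in $\{d_i : i\in[n]\}$ (citing the classic Bertsimas--Sim argument). Your write-up is actually more detailed than the paper's, making the concavity and slope analysis explicit; the only minor wording slip is ``finite limit $-\infty$'' and the implicit assumption $\Gamma'>0$ for that limit, but concavity alone already guarantees the maximum is at $0$ or a breakpoint regardless.
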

\begin{proof}
The proof idea is the same as the classic result from \cite{bertsimas2003robust}. For fixed $(\pmb{y},\pmb{\delta},s)$, we would like to choose values $t_i$ that are as small as possible, i.e., $t_i = \max\{d_i(y_i-x_i)-s,0\}$. Substituting for variables $t_i$, the resulting optimization problem for fixed $(\pmb{y},\pmb{\delta})$ is piece-wise linear with break points within the set $\{d_i : i\in[n]\}$. Hence, the claim follows.
\end{proof}
Using Lemma~\ref{lemma:revenge1}, we can enumerate possible choices for variable $s\in \mathcal{S}$. Thus, the adversarial problem is equivalent to:
\begin{subequations}
\label{eq:revenge4}
\begin{align}
\max_{s\in \mathcal{S}} \Big\{\max\ & \sum_{i\in[n]} (\hat{c}_i + d_i\delta_i) x_i - \sum_{i\in[n]} \hat{c}_i y_i - \Gamma' s - \sum_{i\in[n]} [d_i - d_i x_i - s]_+y_i \\
\text{s.t. } 
& y_i + \delta_i \le 1 & \forall i\in[n] \label{p4-1}\\
& \sum_{i\in[n]} \delta_i \le \Gamma \label{p4-2}\\
& \pmb{y}\in\X \label{p4-3}\\
& \delta_i \in\{0,1\} & \forall i\in[n] \Big\}
\end{align}
\end{subequations}
Note that even if the polyhedron of $\X$ can be described by a totally unimodular coefficient matrix, problem~\eqref{eq:revenge4} is not necessarily totally unimodular as well. As an example, consider the totally unimodular matrix
\[ B=\begin{pmatrix} 1 & 1 & 0 \\ 1 & 0 & 1 \end{pmatrix} \]
or any matrix containing $B$ as a submatrix. Then the resulting coefficient matrix of problem~\eqref{eq:revenge4} is not totally unimodular.

We further note the following general results.

\begin{lemma}\label{lemma:nonneg}
The adversarial problem always has non-negative optimal objective value.
\end{lemma}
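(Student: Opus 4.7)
The plan is to exhibit a feasible adversarial choice that certifies a non-negative objective value. Since the adversarial problem is a maximization over $(\pmb{\delta},\pmb{y})\in\Delta(\Gamma)\times\X$ of a min over $\pmb{\epsilon}\in\Delta(\Gamma')$, it suffices to produce one feasible pair $(\pmb{\delta},\pmb{y})$ for which the inner minimum is at least $0$.

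The natural candidate is to let the adversary mimic the decision maker: set $\pmb{y}=\pmb{x}$ and $\pmb{\delta}=\pmb{0}$. This is feasible because $\pmb{x}\in\X$ by assumption, and $\pmb{0}\in\Delta(\Gamma)$ trivially. With this choice, $x_i-y_i=0$ for every $i\in[n]$, so the inner summand
\[
\sum_{i\in[n]}(\hat{c}_i+d_i\delta_i+d_i\epsilon_i)(x_i-y_i)
\]
vanishes identically for every $\pmb{\epsilon}\in\Delta(\Gamma')$. Consequently the minimum over $\pmb{\epsilon}$ equals $0$, and the outer maximum is bounded below by $0$.

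There is no real obstacle here; the result is structural, reflecting the fact that the adversary can always guarantee a tie by copying the decision maker's solution. The only thing to double-check is that $\pmb{y}=\pmb{x}$ is indeed admissible (it is, because both solutions are drawn from the same feasible set $\X$), and that $\pmb{\delta}=\pmb{0}$ lies in $\Delta(\Gamma)$ (it does, since $\Gamma\ge 0$). Hence the adversarial optimum is always non-negative.
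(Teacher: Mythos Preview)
Your proof is correct and follows essentially the same approach as the paper: set $\pmb{y}=\pmb{x}$ so that $x_i-y_i=0$ for all $i$, yielding an adversarial objective value of $0$ and hence a non-negative optimum. If anything, your argument is slightly more direct than the paper's, which invokes the earlier lemmas on $\pmb{\delta}$ and $\pmb{\epsilon}$ rather than simply observing that the factor $(x_i-y_i)$ vanishes identically.
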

\begin{proof}
Let any $\pmb{x}\in\X$ be given. Set $\pmb{y}=\pmb{x}$. Due to Lemmas~\ref{lemma:NoEpsilonIfX} and \ref{lemma:NoDeltaIfY}, no item is attacked, and the corresponding adversarial objective value is 0. Hence, the optimal value is at least as large.
\end{proof}

\begin{lemma}\label{lemma:increase}
For a fixed solution $\pmb{x}\in\X$, the objective value of the adversarial problem is non-decreasing with respect to $\Gamma$.
\end{lemma}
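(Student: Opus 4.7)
The plan is to exploit the trivial monotonicity of maximization under set inclusion. Fix $\Gamma_1 \le \Gamma_2$ and note that $\Delta(\Gamma_1) \subseteq \Delta(\Gamma_2)$, because any $\pmb{\delta}\in\{0,1\}^n$ with $\sum_i \delta_i \le \Gamma_1$ also satisfies $\sum_i \delta_i \le \Gamma_2$. The inner minimization over $\pmb{\epsilon}\in\Delta(\Gamma')$ depends only on the fixed $\pmb{x}$, the chosen $\pmb{\delta}$ and $\pmb{y}$, and the unchanged budget $\Gamma'$, so it is entirely unaffected by increasing $\Gamma$.

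First I would take any optimal pair $(\pmb{\delta}^\ast,\pmb{y}^\ast)$ for the adversarial problem under budget $\Gamma_1$. Since $\pmb{\delta}^\ast\in\Delta(\Gamma_1)\subseteq\Delta(\Gamma_2)$ and $\pmb{y}^\ast\in\X$, this pair is feasible for the adversarial problem under budget $\Gamma_2$ as well. The inner balancing value
\[ \min_{\pmb{\epsilon}\in\Delta(\Gamma')} \sum_{i\in[n]} (\hat{c}_i + d_i\delta^\ast_i + d_i\epsilon_i)(x_i - y^\ast_i) \]
is identical in both problems, since $\Gamma'$ and the fixed data $\pmb{x}, \pmb{\delta}^\ast,\pmb{y}^\ast$ do not change. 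Hence the $\Gamma_2$-problem attains at least this value, and its optimum is at least that of the $\Gamma_1$-problem.

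Since this step already yields the inequality directly from set inclusion, there is no real obstacle; the statement is essentially a one-line monotonicity argument for a maximum taken over a monotonically growing feasible set, with the inner min being independent of $\Gamma$.
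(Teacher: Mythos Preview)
Your argument is correct and is essentially the same as the paper's: both rely on the monotonicity of a maximum under enlarging the feasible set, since $\Delta(\Gamma_1)\subseteq\Delta(\Gamma_2)$ for $\Gamma_1\le\Gamma_2$ while the inner minimization over $\pmb{\epsilon}\in\Delta(\Gamma')$ is unaffected. The paper phrases this via the mixed-integer formulation~\eqref{eq:revenge3}, but the content of the argument is identical.
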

\begin{proof}
The claim follows directly from the fact that a higher value of $\Gamma$ results in a larger set of feasible solutions in Problem~\eqref{eq:revenge3}.
\end{proof}

We can now show that the case $\Gamma'=n$ can be solved by solving a single nominal problem, independently of the choice of $\Gamma$.

\begin{theorem}
Any optimal solution for the nominal problem with costs $\hat{\pmb{c}}+\pmb{d}$ is also optimal for the balanced regret problem with $\Gamma'=n$.
\end{theorem}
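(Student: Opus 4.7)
The plan is to evaluate $\mathrm{BR}(\pmb{x})$, the adversarial objective at a fixed $\pmb{x}\in\X$, and show that for any $\pmb{x}^\ast$ optimal for $\min_{\pmb{x}\in\X}(\hat{\pmb{c}}+\pmb{d})^t\pmb{x}$ one has $\mathrm{BR}(\pmb{x}^\ast)=0$. Since Lemma~\ref{lemma:nonneg} tells us that $\mathrm{BR}(\pmb{x})\geq 0$ for every $\pmb{x}\in\X$, this immediately yields optimality of $\pmb{x}^\ast$ for balanced regret.

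To carry this out, I first rewrite the adversarial objective using the two structural lemmas. By Lemma~\ref{lemma:NoEpsilonIfX} we may restrict to $\epsilon_i=0$ whenever $x_i=1$, and by Lemma~\ref{lemma:NoDeltaIfY} to $\delta_i=0$ whenever $y_i=1$. For $\Gamma'=n$, the balancing step is unconstrained, so the balancer sets $\epsilon_i=1$ for every index where $x_i=0,y_i=1$, contributing $-d_i$. Introducing the disjoint sets $A=\{i:x_i=1,y_i=0\}$ and $B=\{i:x_i=0,y_i=1\}$, the balancing and adversarial choices yield
\[
\mathrm{BR}(\pmb{x})=\max_{\pmb{y}\in\X}\left[\sum_{i\in A}\hat{c}_i+\max_{S\subseteq A,\,|S|\leq\Gamma}\sum_{i\in S}d_i-\sum_{i\in B}(\hat{c}_i+d_i)\right].
\]

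Next I bound this from above by dropping the cardinality constraint on $S$. Since $d_i\geq 0$, replacing $\max_{|S|\leq\Gamma}\sum_{i\in S}d_i$ with $\sum_{i\in A}d_i$ can only increase the expression, so
\[
\mathrm{BR}(\pmb{x})\leq\max_{\pmb{y}\in\X}\left[\sum_{i\in A}(\hat{c}_i+d_i)-\sum_{i\in B}(\hat{c}_i+d_i)\right]=(\hat{\pmb{c}}+\pmb{d})^t\pmb{x}-\min_{\pmb{y}\in\X}(\hat{\pmb{c}}+\pmb{d})^t\pmb{y}.
\]
Plugging in $\pmb{x}=\pmb{x}^\ast$ makes the right-hand side vanish, so $\mathrm{BR}(\pmb{x}^\ast)\leq 0$, and combined with Lemma~\ref{lemma:nonneg} we conclude $\mathrm{BR}(\pmb{x}^\ast)=0\leq\mathrm{BR}(\pmb{x})$ for every $\pmb{x}\in\X$.

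The only delicate point is bookkeeping the Lemmas correctly when eliminating $\pmb{\epsilon}$ and $\pmb{\delta}$ and verifying that cross-terms on $I_x\cap I_y$ cancel in the telescoping; this is routine but must be done explicitly so that the reduction to cost vector $\hat{\pmb{c}}+\pmb{d}$ is exact. No further obstacle arises, since the key inequality is simply obtained by relaxing the $\Gamma$-budget on the adversary's deviations, which is legitimate because all $d_i$ are non-negative.
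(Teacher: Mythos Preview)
Your proof is correct and follows essentially the same idea as the paper: relax the adversary's $\Gamma$-budget to $n$, which makes the problem collapse to $(\hat{\pmb{c}}+\pmb{d})^t\pmb{x}-\min_{\pmb{y}\in\X}(\hat{\pmb{c}}+\pmb{d})^t\pmb{y}$, and then invoke Lemma~\ref{lemma:nonneg}. The only packaging difference is that the paper does the relaxation step by first treating the special case $\Gamma=\Gamma'=n$ and then appealing to the monotonicity Lemma~\ref{lemma:increase}, whereas you write out the adversarial value explicitly (via Lemmas~\ref{lemma:NoEpsilonIfX} and~\ref{lemma:NoDeltaIfY}) and drop the cardinality constraint on $S$ directly in the formula; this makes your argument self-contained without Lemma~\ref{lemma:increase}, at the cost of a little more bookkeeping.
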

\begin{proof}
Consider first the case $\Gamma=\Gamma'=n$. Then the balanced regret problem becomes
\[ \min_{\pmb{x}\in\X} \max_{\pmb{y}\in\X} \left( (\hat{\pmb{c}}+\pmb{d})^t\pmb{x} - (\hat{\pmb{c}}+\pmb{d})^t\pmb{y} \right) = \left(\min_{\pmb{x}\in\X} (\hat{\pmb{c}}+\pmb{d})^t\pmb{x}\right) -  \left(\min_{\pmb{y}\in\X} (\hat{\pmb{c}}+\pmb{d})^t\pmb{y}\right) = 0\, , \]
i.e., both $\pmb{x}$ and $\pmb{y}$ solutions need to plan for expensive item costs.
Note that the problems in $\pmb{x}$ and $\pmb{y}$ have become independent; thus, the claim holds for this case.
For $\Gamma=n$, the objective value of any nominal minimizer of $\hat{\pmb{c}}+\pmb{d}$ is already zero. Hence, these solutions remain optimal for smaller values of $\Gamma$, due to Lemma~\ref{lemma:nonneg} and Lemma~\ref{lemma:increase}.
\end{proof}

For multi-stage problems there can be a difference in complexity depending on whether continuous or budgeted uncertainty is used. For example, the two-stage selection problem with continuous budgeted uncertainty can be solved in polynomial time \cite{chassein2018recoverable}, but becomes NP-hard for discrete budgeted uncertainty \cite{goerigk2020recoverable}. Observe that this is not the case for balanced regret. Let any $\pmb{y}\in\X$ be fixed in the adversarial problem~\eqref{eq:revenge2}. Then the $\pmb{\delta}$ variables can be relaxed, as there is an optimal integral solution for the relaxed problem. Hence, no differentiation between continuous and discrete budgeted uncertainty is necessary.

\subsection{Solution Methods\label{SubSec::SolutionMethods}}

Recall that the balanced regret problem is given as
\begin{equation}\label{regrev}
\min_{\pmb{x}\in\X}\ \max_{\pmb{\delta}\in\Delta(\Gamma), \pmb{y}\in\X}\ \min_{\pmb{\epsilon}\in\Delta(\Gamma')} \sum_{i\in[n]} (\hat{c}_i + d_i\delta_i + d_i\epsilon_i) (x_i - y_i)\, .
\end{equation}
We write 
\[ \Xi = \X \times \Delta(\Gamma)\, . \]
Note that $\Xi$ is a finite set (albeit of exponential size), which means we can enumerate its elements $\Xi = \{ (\pmb{y}^1,\pmb{\delta})^1, \ldots, (\pmb{y}^K,\pmb{\delta}^K)\}$ with $K=|\Xi|$. Hence, problem~\eqref{regrev} is equivalent to the following problem.
\begin{subequations}
\label{regrev2}
\begin{align}
\min\ & z \\
\text{s.t. } & z \ge \sum_{i\in[n]} (\hat{c}_i + d_i\delta^k_i + d_i \epsilon^k_i)(x_i-y^k_i) & \forall k\in[K] \label{eqnonlinear} \\
& \sum_{i\in[n]} \epsilon^k_i \le \Gamma'  & \forall k\in[K]\\
& \pmb{\epsilon}^k \in \{0,1\}^n & \forall k\in[K]\\
& \pmb{x}\in\X 
\end{align}
\end{subequations}
Note that the product $\epsilon^k_ix_i$ is non-linear. 
It can be linearized by adding constraints $\epsilon^k_i + x_i \le 1$ for all $i\in[n]$, $k\in[K]$ and changing constraints~\eqref{eqnonlinear} to
\[ z \ge \sum_{i\in[n]} (\hat{c}_i + d_i\delta^k_i)x_i - (\hat{c}_i + d_i \delta^k_i + d_i \epsilon^k_i)y^k_i \quad \forall k\in[K] \, .\]
Solving this problem with all scenarios is denoted as the enumeration approach.
To avoid the full enumeration of $\Xi$, the following iterative method can be used (see also \cite{zeng2013solving}). For any subset $\Xi'\subseteq\Xi$, solving problem~\eqref{regrev2} gives a lower bound. The true objective value of the resulting solution $\pmb{x}$ can be evaluated by solving the adversarial problem. While the resulting upper bound and the current lower bound do not coincide, we add the solution to the adversarial problem to the current set of scenarios $\Xi'$ and repeat the process. As $\Xi$ is finite, this method ends after a finite number of iterations. We refer to this approach as the iterative solution method.

Another option is to formulate the balanced regret problem as a quantified program, which can then be solved using a general open-source solver, such as Yasol \cite{YasolACG17,DissMichael}. This solver has the advantage of being able to solve general multistage robust discrete linear optimization problems with polyhedral and decision-dependent uncertainty sets, 
without the need to reformulate them into mixed-integer programs.
In \cite{goerigk2021multistage}, Yasol was already used to solve multi-stage robust optimization problems.

As before, we write $\Xi = \X \times \Delta(\Gamma)$,
i.e. the universally quantified variables must provide a valid solution to the basic problem ($\pmb{y} \in \mathcal{X}$)  and specify at most $\Gamma$ indices for which the objective value increases ($\pmb{\delta}\in \Delta(\Gamma)$). The corresponding quantified program is then as follows:
\begin{align*}
\min\ & \sum_{i\in[n]} \hat{c}_i (x_i - y_i) + \sum_{i\in[n]} d_i (\delta_i+\epsilon_i)(x_i -y_i)  \\
\textnormal{s.t.}\ &\exists \pmb{x} \in \{0,1\}^n \quad \forall (\pmb{y},\pmb{\delta})\in \Xi \quad \exists \pmb{\epsilon}\in \{0,1\}^n : \nonumber \\ 
&\sum_{i \in [n]} \epsilon_i \leq \Gamma'  \\
&\pmb{x} \in \mathcal{X}
\end{align*}
This formulation has to be linearized for Yasol. To this end, new variables $\alpha^x_i$ and $\alpha^y_i$ are introduced in order to represent a cost increase of item $i$ selected via $\pmb{x}$ and $\pmb{y}$, respectively. As those variables only process information given by the variables $\pmb{x}$, $\pmb{\delta}$, $\pmb{y}$ and $\pmb{\epsilon}$, they are placed in the final existential variable block.
\begin{align*}
\min\ & \sum_{i\in[n]} \hat{c}_i x_i + \sum_{i\in[n]} d_i \alpha^x_i - \sum_{i\in[n]}\hat{c}_i y_i - \sum_{i\in[n]} d_i \alpha^y_i  \\
\textnormal{s.t.}\ &\exists \pmb{x} \in \{0,1\}^n \quad \forall (\pmb{y},\pmb{\delta})\in \Xi   \quad \exists \pmb{\epsilon}\in \{0,1\}^n,\ \pmb{\alpha}^x\in \{0,1\}^n,\  \pmb{\alpha}^y\in \{0,1\}^n : \nonumber \\ 
&\sum_{i \in [n]} \epsilon_i \leq \Gamma  \\
&\pmb{\alpha}^x \geq (\pmb{\delta}+\pmb{\epsilon})+ \pmb{x}-\pmb{1}\\
&\pmb{\alpha}^y \leq  (\pmb{\delta}+\pmb{\epsilon})\\
&\pmb{\alpha}^y \leq \pmb{y}\\
&\pmb{x} \in \mathcal{X}\\
\end{align*}
In Section~\ref{sec:experiments}, we compare the performance when solving balanced regret problems with the iterative solution method and with Yasol.

\section{Balanced Regret for Multi-Representative Selection}
\label{sec:selection}

\subsection{Problem Hardness}
\label{sec:selregrev}

We now consider the multi-representative selection problem (see, e.g., \cite{goerigk2020recoverable}), where $\X = \{ \pmb{x}\in\{0,1\}^n : \sum_{i\in T_\ell} x_i = p_\ell \ \forall \ell\in[L]\}$ for a partition $T_1 \cup T_2 \cup \ldots \cup T_L = [n]$ and integers $p_\ell \le |T_\ell|$. Recall that this contains the selection problem (where $L=1$) and the representative selection problem (where $p_\ell=1$ for all $\ell\in[L]$) as special cases. We show that the balanced regret problem is NP-hard for both special cases.

\begin{theorem}
Balanced regret selection (i.e., when $L=1$) with budgeted uncertainty is NP-hard.
\end{theorem}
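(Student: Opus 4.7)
I would reduce from the Partition problem: given positive integers $a_1,\dots,a_n$ with $\sum_{i\in[n]} a_i = 2A$, decide whether there exists $S\subseteq[n]$ with $\sum_{i\in S} a_i = A$. After a standard scaling (e.g., shifting each $a_i$ by a sufficiently large constant and updating $A$ accordingly), I would assume $a_i \le \tfrac{2}{3}A$ for every $i$, so that the coefficients $A-\tfrac{3}{2}a_i$ introduced below are non-negative.

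The construction I would use builds a selection instance with three groups of items. First, $n$ items of type $\alpha$, one per $a_i$, with $\hat{c}_i = a_i$ and $d_i = A-\tfrac{3}{2}a_i$. Second, a pool of items of type $\beta$ with $\hat{c}_i = 0$ and $d_i = \tfrac{3}{2}A-\tfrac{1}{2}$. Third, a pool of items of type $\gamma$ with $\hat{c}_i = 0$ and $d_i = A-\tfrac{1}{2}$. The selection cardinality $p$, the budgets $\Gamma$ and $\Gamma'$, and the sizes of the $\beta$- and $\gamma$-pools would be tuned so that any sensible solution $\pmb{x}$ is forced to pick a prescribed number of items from each group, and symmetrically for any sensible adversary solution $\pmb{y}$. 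The role of the $\beta$- and $\gamma$-items is to act as "dummy" targets that the adversary attacks via $\pmb{\delta}$ and the balancer counter-attacks via $\pmb{\epsilon}$ by default; the $-\tfrac{1}{2}$ offsets hidden in their deviations calibrate the objective so that, once the dummy attacks are accounted for, the remaining contribution to the regret depends essentially on the imbalance $\sum_{i\in A_{\pmb{x}}} a_i - A$, where $A_{\pmb{x}}$ indexes the $\alpha$-items chosen by $\pmb{x}$.

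The forward direction is the easy one: given $S$ with $\sum_{i\in S} a_i = A$, I would let $\pmb{x}$ pick the $\alpha$-items in $S$ together with the prescribed number of $\beta$- and $\gamma$-items, and check that against every $(\pmb{\delta},\pmb{y})$ the balancer can choose $\pmb{\epsilon}$ so that the regret does not exceed a certain half-integer threshold $\tau$. For the converse, I would invoke Lemmas~\ref{lemma:NoEpsilonIfX} and \ref{lemma:NoDeltaIfY} to restrict to response patterns in which $\pmb{\delta}$ touches only items outside $\pmb{y}$ and $\pmb{\epsilon}$ touches only items outside $\pmb{x}$; this already trims the space of relevant attack patterns substantially. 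Then, assuming the $\alpha$-part of $\pmb{x}$ does not correspond to an exact partition, I would exhibit an adversarial $(\pmb{\delta},\pmb{y})$ whose $\pmb{y}$ mirrors the structure of $\pmb{x}$ but shifts one $\alpha$-item across the $A_{\pmb{x}}$/complement boundary, and show that the integer discrepancy in the $\alpha$-items combined with the $-\tfrac{1}{2}$ offsets of the $\beta$- and $\gamma$-items forces the objective strictly above $\tau$, regardless of the response $\pmb{\epsilon}$.

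The main obstacle is this final game-theoretic analysis: one must verify that a careful joint choice of $p$, $\Gamma$, $\Gamma'$ and the pool sizes makes the integer-versus-half-integer gap isolate precisely the partition condition, and that the case analysis over all decompositions of $\pmb{x}$ and $\pmb{y}$ across the three item groups closes. Because balanced regret involves three alternating optimizations, this bookkeeping is more delicate than in the classical min-max regret hardness proofs, and choosing the threshold $\tau$ so that it sits between "partition exists" and "partition does not exist" is where the construction truly needs to be made tight.
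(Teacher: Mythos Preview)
Your construction is essentially the paper's: three item groups $\alpha,\beta,\gamma$ with the same $\alpha$-costs $\hat c_i=a_i$, $d_i=A-\tfrac32 a_i$, and dummy groups with $\hat c=0$ and deviations of the form $\tfrac32 A-\text{const}$ and $A-\text{const}$. So the idea is on target, but what you have written is a sketch, not a proof, and there is one concrete structural gap.

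You reduce from \emph{Partition}, whereas the paper reduces from \emph{Equipartition}. This is not a cosmetic difference. In a selection instance you must pack exactly $p$ items, so the number of $\alpha$-items chosen by $\pmb x$ is pinned down once you fix $p$ and the dummy-pool sizes. If the source problem is general Partition, the size $|S|$ of the witnessing subset is unknown, and there is no way to ``tune $p$ so that any sensible $\pmb x$ picks a prescribed number of items from each group'' without already knowing $|S|$. Your shifting step only ensures $a_i\le\tfrac23 A$; it does not force $|S|=n/2$. The paper avoids this by starting from Equipartition and then committing to very specific parameters: $p=\tfrac n2+1$, $\Gamma=\tfrac n2+1$, $\Gamma'=1$, exactly $2n+2$ $\beta$-items and exactly two $\gamma$-items, with the offset $\tfrac14$ rather than your $\tfrac12$. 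The whole argument then turns on a three-way case split according to whether $\pmb x$ contains $0$, $1$, or $2$ of the $\gamma$-items, and within each case the adversary has exactly two relevant responses (take only $\beta$-items, or take the complementary $\alpha$- and $\gamma$-items). The threshold comes out to $\tfrac{2n-3}{4}A$, achieved precisely in the ``one $\gamma$-item plus $n/2$ $\alpha$-items forming an equipartition'' case.

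You correctly identify that this exhaustive case analysis is ``the main obstacle,'' but that is exactly the content of the proof: without fixing the parameters and carrying the six subcases through, there is no reduction. In particular, your converse sketch (``exhibit an adversarial $(\pmb\delta,\pmb y)$ that shifts one $\alpha$-item across the boundary'') is not how the paper argues and would not close by itself, since the adversary's best response is sometimes to abandon $\alpha$ entirely and go all-$\beta$.
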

\begin{proof}
We use the weakly NP-hard equipartition problem (see \cite{garey1979computers}) with an even number $n \in \mathbb{N}$ of items and weights $a_i  \in \mathbb{N}$, $i \in [n]$, with the problem statement: is there a subset $S \subseteq [n]$, $|S|=\frac{n}{2}$, with $\sum_{i \in S}a_i = \sum_{i \in [n]\setminus S} a_i$? Let $A=\sum_{i \in [n]}a_i$.
We construct a selection problem with balanced regret by setting the parameters $\hat{\pmb{c}}$ and $\pmb{d}$ as given in Table~\ref{Tab::Reduction}, dividing them into the three sets $\alpha$, $\beta$ and $\gamma$.

\begin{table}[htb]
\begin{center}
\caption{Items costs for the balanced regret selection problem.\label{Tab::Reduction}}
\begin{tabular}{l|lll|lll|ll}
\multicolumn{1}{c}{}&\multicolumn{3}{c}{$\alpha$ ($n$ items)}
&\multicolumn{3}{c}{$\beta$ ($2n+2$ items)}&\multicolumn{2}{c}{$\gamma$ ($2$ items)}\\
\multicolumn{1}{c}{}&\multicolumn{3}{c}{$\overbrace{\hspace{\ThreeColumnWidthAlpha}}$}&\multicolumn{3}{c}{$\overbrace{\hspace{\ThreeColumnWidthBeta}}$}&\multicolumn{2}{c}{$\overbrace{\hspace{\ThreeColumnWidthGamma}}$}\\
$i$&$1$&$\ldots$&$n$&$n+1$&$\ldots$ &$3n+2$&$3n+3$&$3n+4$
\\\midrule
$\hat{c}_i$&$a_1$&$\ldots$&$a_n$ &$0$&$\ldots$&$0$&$0$&$0$\\
$d_i$&$A-\frac{3}{2}a_1$&$\ldots$&$A-\frac{3}{2}a_n$&$\frac{3}{2}A-\frac{1}{4}$&$\ldots$&$\frac{3}{2}A-\frac{1}{4}$&$A-\frac{1}{4}$&$A-\frac{1}{4}$
\end{tabular}
\end{center}
\end{table}

We need to select $\frac{n}{2}+1$ items of the $3n+4$ items that are available. We further set
$\GammaA=\frac{n}{2}+1$ and $\GammaE=1$ to complete the description of the instance.

Note that for the first-stage decision, we can assume that no item from $\beta$ is selected. Otherwise, it can be exchanged for an item from $\alpha$ or $\gamma$, giving non-increasing costs for the first-stage solution, and non-decreasing costs for the adversary solution.
Hence, three cases remain, varying in the number of items selected from $\gamma$. In each case we denote by $S\subseteq [n]$ the set of items selected from $\alpha$ and write $X=\sum_{i \in S} a_i$. For the adversary decision, note that if one of the items from $\beta$ is selected, then selecting all $\frac{n}{2}+1$ items from $\beta$ is reasonable, as $\GammaE=1$. Also note that the adversary will always increase the costs of all items selected in the first decision stage.
\begin{enumerate}
\item \textit{First-stage decision selects no item from $\gamma$ and $\frac{n}{2}+1$ items from $\alpha$.}\\
The first stage cost (including the cost increase) are $(\frac{n}{2}+1)A-\frac{X}{2}$. The adversary then has two choices:
\begin{enumerate}
\item \textit{Adversary selects all items from $\beta$.} In the third (balancing) stage, the cost of one of those items is increased, resulting in the overall objective value 
$(\frac{n}{2}+1)A-\frac{X}{2} - (\frac{3}{2}A - \frac{1}{4}) = \frac{2n-2}{4}A-\frac{X}{2}+\frac{1}{4}$.

\item \textit{Adversary selects $\frac{n}{2}-1$ items from $\alpha$ not selected in the first stage and both from $\gamma$.}
The cost of one of the items from $\gamma$ is increased, resulting in the overall objective value
$ (\frac{n}{2}+1)A-\frac{X}{2} - (A-X+A - \frac{1}{4}) = \frac{2n-4}{4}A+\frac{X}{2}+ \frac{1}{4}$.
\end{enumerate}
We conclude that the worst-case objective value of this case is
\begin{align*}
& \max \left\{ \frac{2n-2}{4}A-\frac{X}{2}+\frac{1}{4}, \frac{2n-4}{4}A+\frac{X}{2}+ \frac{1}{4}\right\}  \\
= &\frac{2n-4}{4}A + \frac{1}{4} + \frac{1}{2}\max\left\{A - X, X \right\}\\
\ge & \frac{2n-3}{4}A+\frac{1}{4}\,.
\end{align*}

\item \textit{First-stage decision selects one item from $\gamma$ and $\frac{n}{2}$ items from $\alpha$.}\\
The first stage cost (including the cost increase) are $(\frac{n}{2}+1)A-\frac{X}{2}-\frac{1}{4}$. The adversary then has two choices:
\begin{enumerate}
\item \textit{Adversary selects all items from $\beta$,} resulting in the overall objective value
$(\frac{n}{2}+1)A-\frac{X}{2}-\frac{1}{4} - (\frac{3}{2}A - \frac{1}{4}) = \frac{2n-2}{4}A-\frac{X}{2}$.

\item \textit{Adversary selects $\frac{n}{2}$ items from $\alpha$ and the item from $\gamma$ for which the costs where not increased yet.}\
Then the cost of the item from  $\gamma$ is increased, resulting in the overall objective value 
$(\frac{n}{2}+1)A-\frac{X}{2}-\frac{1}{4} - (A - X + A - \frac{1}{4}) = \frac{2n-4}{4}A+\frac{X}{2}$.
\end{enumerate}
We conclude that the worst-case objective value of this case is
\[ \max\left\{ \frac{2n-2}{4}A-\frac{X}{2}, \frac{2n-4}{4}A+\frac{X}{2} \right\} = \frac{2n-4}{4}A + \frac{1}{2}\max\left\{ A - X, X \right\}\,,\]
which is minimized with value $\frac{2n-3}{4}A$, if $X=\frac{A}{2}$ can be achieved.

\item \textit{First-stage decision selects two items from $\gamma$ and $\frac{n}{2}-1$ items from $\alpha$.}\\
The first stage costs (including the cost increase) are $(\frac{n}{2}+1)A-\frac{X}{2}-\frac{1}{2}$. The adversary then has two choices:
\begin{enumerate}
\item \textit{Adversary selects all items from $\beta$,} resulting in the overall objective value
$(\frac{n}{2}+1)A-\frac{X}{2}-\frac{1}{2} - (\frac{3}{2}A - \frac{1}{4}) = \frac{2n-2}{4}A-\frac{X}{2}-\frac{1}{4}$.
\item \textit{Adversary selects $\frac{n}{2}+1$ items from $\alpha$.} Then we increase the costs of the item for which $d_i$ is maximal, i.e., $a_i$ is minimal. Let $\tilde{a}=\min_{i \in [n]\setminus S } \{a_i\}$ denote this value. This results in the overall objective value 
$(\frac{n}{2}+1)A-\frac{X}{2}-\frac{1}{2} - (A-X+A-\frac{3}{2}\tilde{a}) = 
\frac{2n-4}{4}A+\frac{X}{2}+\frac{3}{2}\tilde{a}-\frac{1}{2}$.
\end{enumerate}
We conclude that the worst-case costs of this case are
\begin{align*}
&\max\left\{  \frac{2n-2}{4}A-\frac{X}{2}-\frac{1}{4}, \frac{2n-4}{4}A+\frac{X}{2}+\frac{3}{2}\tilde{a}-\frac{1}{2} \right\}\\
= &\frac{2n-4}{4}A - \frac{1}{4} + \frac{1}{2}\max\left\{ A - X, X + 3\tilde{a} - \frac{1}{2} \right\}\\
\ge & \frac{2n-4}{4}A - \frac{1}{4} + \frac{1}{2} \left(\frac{1}{2}A + \frac{6\tilde{a}-1}{4} \right) \\
= & \frac{2n-3}{4}A + \frac{6\tilde{a}-3}{8}\, .
\end{align*}
\end{enumerate}

Therefore, if an equipartition of the weights $a_i$ exists, the optimal objective value of the built selection problem with balanced regret is $\frac{2n-3}{4}A$ and the items taken from $\alpha$ indicate the partition (case~2). Otherwise, the optimal objective value is strictly larger than $\frac{2n-3}{4}A$.
\end{proof}

\begin{theorem}
Balanced regret representative selection (i.e., when $p_\ell=1$ for all $\ell \in[L]$) with budgeted uncertainty is NP-hard.
\end{theorem}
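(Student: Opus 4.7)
The plan is to reduce from the equipartition problem once more, adapting the $\alpha$/$\beta$/$\gamma$ construction from the previous theorem to the representative selection setting, where exactly one item is chosen from each group $T_\ell$. The key idea is to encode each weight $a_i$ as a binary choice inside its own group, while the roles of the $\beta$- and $\gamma$-item classes are played by auxiliary groups in which the ``real'' item is essentially forced to be picked through a dummy partner of prohibitive cost.

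Concretely, for each weight $a_i$ of the equipartition instance I would create a ``partition group'' $T_i^\alpha=\{i_+,i_-\}$, where $i_+$ inherits the $\alpha$-costs $(\hat c_{i_+},d_{i_+})=(a_i,\,A-\tfrac{3}{2}a_i)$ and $i_-$ is a neutral alternative whose base cost and deviation make it attractive to neither the decision maker nor the adversary. For each of the $2n+2$ $\beta$-items and each of the two $\gamma$-items from the previous construction I would create a two-element group pairing the original item with a dummy of prohibitive base cost $M$, so that $p_\ell=1$ forces the ``real'' item to be selected. Setting $\Gamma=\tfrac{n}{2}+1$ and $\Gamma'=1$ as in the selection reduction makes the three strategic choices of the adversary (attack the $\beta$-slots, the $\gamma$-slots, or an $\alpha$-slot) line up identically with those of the previous proof.

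Under this encoding the decision maker's only real freedom is, in each $T_i^\alpha$, whether to play $i_+$ or $i_-$, which corresponds to selecting the subset $S\subseteq[n]$. The balanced regret objective then evaluates to the same case-by-case expressions as in the previous theorem, and is minimized at $\tfrac{2n-3}{4}A$ if and only if $X=\sum_{i\in S}a_i=\tfrac{A}{2}$ is attainable. Since $M$ can be chosen polynomially in $A$ and $n$, the reduction remains pseudopolynomial and hence establishes weak NP-hardness.

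The main obstacle is calibrating the $i_-$ items carefully enough that (a) the decision maker is driven to pick exactly $\tfrac{n}{2}$ of the $i_+$ options at the optimum --- rather than all of them or none --- even though representative selection imposes no explicit cardinality constraint on the $i_+$ count, and (b) the adversary never benefits from attacking an $i_-$ item in place of an $i_+$, $\beta$- or $\gamma$-item. If symmetric neutral costs do not suffice, I would absorb the remaining imbalance into a small affine penalty on the $i_-$ costs, chosen so that the lower-bound expressions in the three cases of the previous proof still collapse to the same clean threshold $\tfrac{2n-3}{4}A$.
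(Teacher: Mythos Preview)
Your porting strategy has a structural flaw that goes beyond the calibration worry you flag at the end. In representative selection, \emph{both} the decision maker and the adversary must pick exactly one item from every group. If each $\beta$- and $\gamma$-group consists of the ``real'' item together with a dummy of prohibitive base cost $M$, then the adversary --- who wants to minimise her own cost --- is forced to pick the real item just as the decision maker is. Hence $x_i=y_i$ on every $\beta$- and $\gamma$-item, and those items contribute $(\hat c_i+d_i\delta_i+d_i\epsilon_i)(x_i-y_i)=0$ to the balanced regret. Your entire auxiliary structure becomes inert, and the problem collapses to the $n$ independent $\alpha$-groups with the unspecified $(i_+,i_-)$ costs. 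At that point nothing ties the choices across groups together in a way that encodes a global sum constraint like $\sum_{i\in S}a_i=A/2$; the ``three adversary options'' you invoke from the selection proof simply do not exist here, because the adversary cannot put her picks into the $\beta$-slots \emph{instead of} the $\alpha$-slots.

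The paper avoids this by a genuinely different gadget: it reduces from (ordinary) partition, builds only $n$ groups --- one per weight $a_i$ --- and places \emph{four} items $\alpha_i,\beta_i^1,\beta_i^2,\gamma_i$ inside each group, with $\Gamma=n$ and $\Gamma'=1$. The decision maker's meaningful choice in group $i$ is between $\alpha_i$ and $\gamma_i$ (the $\beta$'s have huge deviation), while the adversary may respond in the \emph{same} group with the item the decision maker did not take, or with a $\beta$-item. Because the $\beta$- and $\gamma$-options live inside the very groups where the $\alpha$-choices are made, the adversary's strategic alternatives are preserved, and the threshold $(2n-2)A-3a_{\max}$ separates yes/no instances. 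So the obstacle is not fine-tuning $i_-$; it is that the $\beta$/$\gamma$ machinery must be internal to each partition group, not bolted on as separate forced groups.
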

\begin{proof}
We use the weakly NP-hard partition problem (see \cite{garey1979computers}) with $ 
n \in \mathbb{N}$ items and weights $a_i  \in \mathbb{N}$, $i \in [n]$. The question is whether there exists a subset $S\subseteq[n]$ with $\sum_{i\in S} a_i = \sum_{i\in[n]\setminus S} a_i$.
Let $A=\sum_{i \in [n]}a_i$ and  $\amax=\max_{i \in [n]}a_i$. Without loss of generality, we assume that $\amax \leq \frac{A}{3}$. Note that if this is not the case, we can modify the instance by adding two items of size $A$.

We construct a representative selection problem with balanced regret with $n$ partitions and four items per partition, by setting the parameters $\hat{\pmb{c}}$ and $\pmb{d}$ as given in Table~\ref{Tab::ReductionRepresentative} for each partition $i$. We set $\Gamma=n$ and $\Gamma'=1$. 
\begin{table}[htb]
\centering
\caption{Item costs for the balanced regret representative selection problem.\label{Tab::ReductionRepresentative}}
\begin{tabular}{l|llll}
$i$&$\alpha_i$&$\beta_i^1$&$\beta_i^2$&$\gamma_i$
\\\midrule
$\hat{c}_i$&$A+2a_i$&$0$&$0$&$A-2a_i$  \\
$d_i$&$2A-3a_i$&$(n+2)A+3\amax$&$(n+2)A+3\amax$&$2A+3a_i$
\end{tabular}

\end{table}
Since all $d_i$ values are nonnegative and $\Gamma=n$, the adversary always increases the costs of all items selected in the first stage. In the first stage, none of the $\beta$ items will be selected, as their resulting (increased) cost is much larger than the costs of items in $\alpha$ and in $\gamma$; and, as there are two of them, it will have no effect on the options of the adversary. Hence, let $S$ and $\bar{S}$ be the sets containing the indices of selected $\alpha$ and $\gamma$ items, respectively. Let $X=\sum_{i \in S}a_i$ and $\bar{X}=\sum_{i \in \bar{S}}a_i$. We distinguish three cases, varying in the number of $\alpha$ and $\gamma$ items selected.

\begin{enumerate}
\item \textit{First-stage decision selects at least one $\alpha$ and at least one $\gamma$ item.}\\
The cost of each selected representative item will be increased by the adversary. Hence, the first-stage costs are $\bar{X}-X+3nA$. For the solution selected by the adversary, two cases remain:

\begin{enumerate}
\item \textit{Adversary selects at least one $\beta$ item.}\\
If for at least one partition a $\beta$ item is selected, it is reasonable to select a $\beta$ item for each partition, as the cost of only one item can be increased in the balancing stage. Hence, the resulting overall objective value is 
$\bar{X} - X + 3nA - ((n+2)A + 3\amax) = \bar{X}-X +(2n-2)A-3\amax$.

\item \textit{Adversary selects only $\alpha$ and $\gamma$ items.}\\
Let $J\subseteq[n]$ denote the set of partitions where the adversary chooses the same item as the first-stage solution. If $J=[n]$, the adversary chooses all items as in the first-stage decision, and the objective value is zero. Note that in an optimal adversary decision, $J$ cannot contain a $\gamma$ item, as choosing the $\alpha$ item in this case always results in a better objective value for the adversary. Hence, we consider any solution with $\bar{J}=[n]\setminus J \neq \emptyset$ and with $\bar{J}\neq[n]$. We show that such solutions can be improved by adding another item to $\bar{J}$. We write $\bar{J}^\alpha\cup \bar{J}^\gamma = \bar{J}$ to denote indices where an $\alpha$ item or a $\gamma$ item is chosen by the adversary, respectively. 

If $\bar{J}^\gamma = \emptyset$, the current total costs are
\[ \sum_{i\in \bar{J}^\alpha} (3A + a_i - (A+2a_i) ) - \max_{i\in \bar{J}^\alpha} (2A-3a_i) = 
 \sum_{i\in \bar{J}^\alpha} (2A - a_i) - \max_{i\in \bar{J}^\alpha} (2A-3a_i) >0\, .
\]
On the one hand, adding an item $i$ to $\bar{J}^\alpha$ results in an increase of the objective value by $2A-a_i + 3[\min_{j \in \bar{J}^\alpha \setminus \{i\}}a_j-a_i]_+>0$. If, on the other hand, an item $i$ is added to the up to now empty set $\bar{J}^\gamma$, the objective value changes by 
$$3A-a_i -(3A+a_i)+ \max_{j\in \bar{J}^\alpha} (2A-3a_j) \geq 2A-5\amax > 0\, .$$

Now assume $\bar{J}^\gamma \neq \emptyset$. Then the total costs are
\begin{align*}
& \sum_{i\in \bar{J}^\alpha} (3A + a_i - (A+2a_i) ) + \sum_{i\in\bar{J}^\gamma} (3A - a_i - (A-2a_i)) - \max_{i\in\bar{J}^\gamma} (2A+3a_i) \\
= & \sum_{i\in \bar{J}^\alpha} (2A - a_i) + \sum_{i\in\bar{J}^\gamma} (2A + a_i) - \max_{i\in\bar{J}^\gamma} (2A+3a_i) \geq -2\amax \, .
\end{align*}
On the one hand, adding an item $i$ to $\bar{J}^\alpha$ results in an increase of the objective value by $2A-a_i >0$ resulting in a positive objective value. If, on the other hand, an item $i$ is added to  $\bar{J}^\gamma$, the objective value changes by 
$$2A+a_i + 3[a_i-\max_{j \in \bar{J}^\gamma \setminus \{i\}}a_j]_+>0 \, .$$

Hence, any solution with $\bar{J}\neq \emptyset$ and $J \neq \emptyset$  can be improved by shifting all items into $\bar{J}^\alpha$ or $\bar{J}^\gamma$.
Thus, in an optimal solution, we have $J=\emptyset$, i.e., the adversary always chooses the $\alpha$ or $\gamma$ item that the first stage did not choose.

With this observation, we can give the total costs in this case as $X-\bar{X} + (2n-2)A - 3\amax(S)$, where $\amax(S) = \max_{i\in S} a_i$ (recall that $S$ denotes the indices of partitions where the first-stage decision packs the $\alpha$ item).

\end{enumerate}

We conclude that the total costs between options a) and b) become
$$(2n-2)A + \max\{X-\bar{X}-3\amax(S),\bar{X}-X-3\amax\}\, .$$
Note that
\begin{align*}
 &\max\{X-\bar{X}-3\amax(S),\bar{X}-X-3\amax\} \\
 \ge & \max\{X-\bar{X}-3\amax,\bar{X}-X-3\amax\}\\
 = &|X-\bar{X}| - 3\amax\, .
 \end{align*}
Hence, the smallest possible objective value $(2n-2)A - 3\amax$ can only be achieved if we choose $S$ such that $\sum_{i\in S} a_i = \sum_{i\in\bar{S}} a_i$ (in this case, w.l.o.g. choose $S$ such that $\amax(S) = \amax$).

\item \textit{First-stage decision selects no $\gamma$ item and all $\alpha$ items, i.e. $S=[n]$.}\\
The first stage costs are $(3n-1)A$. It is sufficient to only consider that the adversary selects all $\gamma$ items. In this case the overall objective value amounts to $(3n-1)A -(nA-2A+2A+3\amax)  = (2n-1)A-3\amax$, which is higher than in case 1.

\item \textit{First-stage decision selects no $\alpha$ item and all $\gamma$ items, i.e. $\bar{S}=[n]$.}\\
The first stage costs are $(3n+1)A$. It is sufficient to only consider that the adversary selects all $\beta$ items, resulting in the overall objective value $(3n+1)A-(n+2)A-3\amax = (2n-1)A-3\amax$, which is higher than in case 1.
\end{enumerate}

We conclude that if a partition of the weights $a_i$ exists, the optimal value of case 1 is $(2n-2)A-3\amax$, which is smaller than the value resulting from cases 2 and 3. If no partition exists, then in each case the optimal value is strictly larger than $(2n-2)A-3\amax$.
\end{proof}

\subsection{Mixed-Integer Program and Special Cases}

We now consider the adversarial problem in the case of multi-representative selection.

\begin{lemma}\label{lemma:two}
The coefficient matrix of the adversarial problem~\eqref{eq:revenge4} for fixed $s\in\mathcal{S}=\{0\}\cup \{d_i : i\in[n]\}$ is totally unimodular for the multi-representative selection problem.
\end{lemma}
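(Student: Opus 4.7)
The plan is to show that the constraint matrix $M$ of \eqref{eq:revenge4} over the variables $(\pmb{y},\pmb{\delta})$ is the edge--vertex incidence matrix of a bipartite graph, which yields total unimodularity by the classical theorem that such matrices are TU. Since $\pmb{x}$ and $s$ are fixed, the objective of \eqref{eq:revenge4} is linear in $(\pmb{y},\pmb{\delta})$, so only the constraint structure matters, and the bound constraints $0\le y_i,\delta_i\le 1$ can be appended as identity rows without destroying TU.

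First I would list the constraint rows defining $M$: the $n$ inequalities $y_i+\delta_i\le 1$, which I label $A_1,\dots,A_n$; the single budget inequality $\sum_i\delta_i\le\Gamma$, labelled $B$; and the $L$ equalities $\sum_{i\in T_\ell}y_i=p_\ell$, labelled $C_1,\dots,C_L$. Then I would count the nonzeros in each column of $M$. The crucial point is that because $T_1,\dots,T_L$ is a partition of $[n]$, every index $i$ lies in exactly one block $T_{\ell(i)}$. Therefore column $y_i$ has exactly two $1$'s, in rows $A_i$ and $C_{\ell(i)}$; column $\delta_i$ has exactly two $1$'s, in rows $A_i$ and $B$; and all other entries of $M$ are $0$.

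Hence $M$ can be interpreted as the edge--vertex incidence matrix of the undirected graph $G$ with vertex set $\{A_1,\dots,A_n,B,C_1,\dots,C_L\}$ in which $y_i$ is the edge $\{A_i,C_{\ell(i)}\}$ and $\delta_i$ is the edge $\{A_i,B\}$. The graph $G$ is bipartite: the vertex partition $\{A_1,\dots,A_n\}\ \sqcup\ \{B,C_1,\dots,C_L\}$ is crossed by every edge. By the classical theorem that the edge--vertex incidence matrix of an undirected graph is TU if and only if the graph is bipartite, $M$ is TU, as claimed.

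The only step that genuinely uses the structure of multi-representative selection is the column count, which relies on the partition property of $T_1,\dots,T_L$: without it, a single $y_i$ could appear in several $C_\ell$-rows, the two-ones-per-column property would break, and the incidence-matrix interpretation would fail. Modulo this observation, the argument is purely structural and poses no real obstacle; as an alternative I could instead invoke Ghouila-Houri directly with the sign assignment $\sigma(A_i)=+1$, $\sigma(B)=\sigma(C_\ell)=-1$, which likewise yields column sums in $\{-1,0,1\}$.
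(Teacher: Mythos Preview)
Your proof is correct. The primary route you take---recognizing that each column of the constraint matrix contains exactly two $1$'s and thereby identifying the matrix with the edge--vertex incidence matrix of a bipartite graph---is a genuinely different (though closely related) argument from the paper's. The paper applies the Ghouila--Houri criterion directly, assigning $+1$ to the partition constraints~\eqref{p4-3} and the budget row~\eqref{p4-2} and $-1$ to the rows~\eqref{p4-1}; this is precisely the alternative you mention at the end (up to a global sign flip). Your bipartite-incidence interpretation has the advantage of giving a structural picture of \emph{why} the matrix is TU, and it makes transparent exactly where the partition property of $T_1,\dots,T_L$ enters, namely in ensuring that every $y_i$-column meets only one $C_\ell$-row. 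The paper's direct Ghouila--Houri verification is shorter because it does not need the two-ones-per-column count, but it is otherwise equivalent in strength and elementarity.
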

\begin{proof}
Using the Ghouila-Houri criterion, let any set of rows of the coefficient matrix be given. Assigning the value $+1$ to rows corresponding to constraints~\eqref{p4-3} (i.e., $\sum_{i\in T_\ell} y_i = p_\ell$ for all $\ell\in[L]$) and to row~\eqref{p4-2} (i.e., $\sum_{i\in[n]} \delta_i \le \Gamma$) while assigning the value $-1$ to any row of constraints~\eqref{p4-1} (i.e., $y_i + \delta_i \le 1$)
then results in a sum in $\{-1,0,1\}$ for each column.
\end{proof}

Let $\{s^1,\ldots,s^R\} = \mathcal{S}$ denote the values in set $\mathcal{S}$. Dualizing the inner maximization problem for each $r \in [R]$ then gives the following compact problem formulation for the balanced regret problem.
\begin{subequations}
\label{eq:revenge5}
\begin{align}
\min\ & t \\
\text{s.t. } & t \ge \sum_{i\in[n]} \hat{c}_i x_i + \Gamma\pi^r + \sum_{i\in[n]} \rho^r_i - \Gamma's^r - \sum_{\ell\in[L]} p_\ell\kappa^r_\ell & \forall r\in[R] \\
& \sum_{i\in T_\ell} x_i = p_\ell & \forall \ell \in[L] \\
& \pi^r + \rho^r_i \ge d_i x_i & \forall i\in[n], r\in[R] \\
& \rho^r_i + \hat{c}_i + [d_i - d_ix_i - s^r]_+ \ge \kappa^r_\ell & \forall \ell\in L, i \in T_\ell, r\in[R] \\
& x_i \in \{0,1\} & \forall i\in[n] \\
& \pi^r \ge 0 & \forall r\in[R] \\
& \rho^r_i \ge 0 & \forall i\in[n], r\in[R] 
\end{align}
\end{subequations}
Note that $[d_i - d_ix_i -s^r]_+$ can be linearized to $[d_i - s^r]_+ (1-x_i)$, as $x_i$ are binary variables.

Using this problem formulation, we show that the following dominance property holds true.
\begin{lemma}\label{lemma:dominance}
Consider two items $i,j\in T_\ell$ with $\hat{c}_i \le \hat{c}_j$ and $\hat{c}_i + d_i \le \hat{c}_j + d_j$. Then there is an optimal solution $\pmb{x}$ with $x_i \ge x_j$.
\end{lemma}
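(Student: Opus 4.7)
My plan is a swap (exchange) argument. Suppose, for contradiction, that every optimal $\pmb{x}^*$ has $x^*_i = 0$ and $x^*_j = 1$. Let $\sigma$ be the transposition swapping $i$ and $j$, and set $\pmb{x} := \sigma(\pmb{x}^*)$. Since $i, j \in T_\ell$, $\pmb{x} \in \X$. It suffices to show $f(\pmb{x}) \le f(\pmb{x}^*)$, where $f$ denotes the balanced regret objective, since then $\pmb{x}$ is also optimal and satisfies $x_i = 1 \ge 0 = x_j$.

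To compare the two values, I transport the adversary and balancer decisions via $\sigma$. Pick $(\pmb{\delta}, \pmb{y})$ attaining the max in the adversarial problem for $\pmb{x}$; by Lemma~\ref{lemma:NoDeltaIfY} we may assume $\delta_k + y_k \le 1$ for all $k$. Set $\pmb{\delta}^* := \sigma(\pmb{\delta})$ and $\pmb{y}^* := \sigma(\pmb{y})$; since $\sigma$ permutes indices within $T_\ell$, this pair is feasible for the adversary against $\pmb{x}^*$ and still satisfies $\delta^*_k + y^*_k \le 1$. Let $\pmb{\epsilon}^*$ be a minimizer of the balancer for $(\pmb{x}^*, \pmb{\delta}^*, \pmb{y}^*)$; by Lemma~\ref{lemma:NoEpsilonIfX} combined with $x^*_j = 1$, we may take $\epsilon^*_j = 0$. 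Define $\hat{\pmb{\epsilon}} := \sigma(\pmb{\epsilon}^*) \in \Delta(\Gamma')$; note $\hat\epsilon_i = 0$.

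Writing $F(\pmb{x}, \pmb{\delta}, \pmb{y}, \pmb{\epsilon}) = \sum_k (\hat c_k + d_k \delta_k + d_k \epsilon_k)(x_k - y_k)$, all summands for $k \notin \{i, j\}$ coincide in $F(\pmb{x}, \pmb{\delta}, \pmb{y}, \hat{\pmb{\epsilon}})$ and $F(\pmb{x}^*, \pmb{\delta}^*, \pmb{y}^*, \pmb{\epsilon}^*)$. Expanding the $i$- and $j$-terms using $x_i = 1, x_j = 0, x^*_i = 0, x^*_j = 1$ and $\hat\epsilon_i = \epsilon^*_j = 0$ gives
\[ F(\pmb{x}, \pmb{\delta}, \pmb{y}, \hat{\pmb{\epsilon}}) - F(\pmb{x}^*, \pmb{\delta}^*, \pmb{y}^*, \pmb{\epsilon}^*) = (1-y_i)\bigl[(\hat c_i - \hat c_j) + (d_i - d_j)\delta_i\bigr] + y_j\bigl[(\hat c_i - \hat c_j) + (d_i - d_j)(\delta_j + \epsilon^*_i)\bigr]. \]
A short case analysis on $(y_i, y_j) \in \{0,1\}^2$, using $\delta_i + y_i \le 1$ and $\delta_j + y_j \le 1$, shows that each bracket whose prefactor is non-zero reduces to $(\hat c_i - \hat c_j) + \lambda(d_i - d_j)$ for some $\lambda \in \{0, 1\}$, hence is $\le 0$ by the two hypotheses $\hat c_i \le \hat c_j$ and $\hat c_i + d_i \le \hat c_j + d_j$. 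Chaining inequalities then yields $f(\pmb{x}) \le F(\pmb{x}, \pmb{\delta}, \pmb{y}, \hat{\pmb{\epsilon}}) \le F(\pmb{x}^*, \pmb{\delta}^*, \pmb{y}^*, \pmb{\epsilon}^*) \le f(\pmb{x}^*)$.

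The main obstacle is that the hypotheses do not constrain the sign of $d_i - d_j$ (e.g., $\hat c_i = 0, d_i = 10, \hat c_j = 5, d_j = 5$ is admissible with $d_i > d_j$), so individual $(d_i - d_j)$ contributions can a priori push the difference the wrong way. The crucial structural simplification that makes the sign analysis close is the appeal to Lemma~\ref{lemma:NoEpsilonIfX} to force $\epsilon^*_j = 0$, and symmetrically $\hat\epsilon_i = 0$: without this, the identity above would pick up an extra term $(1-y_i)(d_i - d_j)\epsilon^*_j$ that can be strictly positive when $d_i > d_j$ and $y_i = 0$, destroying the bound.
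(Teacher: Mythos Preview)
Your swap argument is correct. The identity you derive is right, and the case analysis closes: when $y_i=0$ the first bracket is $(\hat c_i-\hat c_j)+(d_i-d_j)\delta_i$ with $\delta_i\in\{0,1\}$; when $y_j=1$ the constraint $\delta_j+y_j\le 1$ forces $\delta_j=0$, leaving $(\hat c_i-\hat c_j)+(d_i-d_j)\epsilon^*_i$ with $\epsilon^*_i\in\{0,1\}$. In each case the bracket equals either $\hat c_i-\hat c_j$ or $(\hat c_i+d_i)-(\hat c_j+d_j)$, both nonpositive. The chain $f(\pmb{x})\le F(\pmb{x},\pmb{\delta},\pmb{y},\hat{\pmb{\epsilon}})\le F(\pmb{x}^*,\pmb{\delta}^*,\pmb{y}^*,\pmb{\epsilon}^*)\le f(\pmb{x}^*)$ is set up correctly, and your use of Lemmas~\ref{lemma:NoEpsilonIfX} and~\ref{lemma:NoDeltaIfY} to kill the dangerous terms is exactly the right structural move.

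Your route is genuinely different from the paper's. The paper argues via the compact formulation~\eqref{eq:revenge5}: it fixes an $r\in[R]$, keeps $\pi$ and $\kappa$ unchanged, adjusts only $\rho_i,\rho_j$ after the swap, and verifies $\hat c_j+\rho_i+\rho_j\ge \hat c_i+\rho'_i+\rho'_j$ through a case split on the sign of $d_i-d_j$ and the position of $s$ relative to $d_i,d_j$. Your argument stays at the primal $\min$--$\max$--$\min$ level and transports all players' decisions through the transposition~$\sigma$. This buys you two things: you avoid LP duality and the machinery of the compact model entirely, and your proof actually holds for any feasible set $\X$ that is invariant under swapping $i$ and $j$, not just multi-representative selection. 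The paper's approach, by contrast, is tied to the dual structure of~\eqref{eq:revenge5} but is perhaps more mechanical once that formulation is available. One cosmetic point: the ``for contradiction'' wrapper is unnecessary; you can state the argument directly as ``given any optimal $\pmb{x}^*$ with $x^*_i=0,x^*_j=1$, the swapped solution is also optimal''.
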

\begin{proof}
Consider some solution $\pmb{x}$ where $x_i = 0$ and $x_j=1$. We construct a new solution $\pmb{x}'$ where $x'_i=1$ and $x'_j=0$ and show that its objective value does not increase. We focus on any $r\in[R]$ and drop the symbol $r$ for ease of presentation.
The modified solution uses the same values for $\pi$ and $\kappa$ variables. For $\pmb{\rho}$, we only change the values corresponding to items $i$ and $j$, i.e., we consider the following:
\begin{align*}
\rho_i &= \max\{\kappa - \hat{c}_i - [d_i - s]_+, 0\} & (x_i &= 0) \\
\rho_j &= \max\{d_j - \pi, \kappa-\hat{c}_j,0\}  & (x_j &= 1)\\
\rho'_i &= \max\{d_i - \pi, \kappa-\hat{c}_i, 0\}  & (x'_i &= 1)\\
\rho'_j & = \max\{\kappa - \hat{c}_j - [d_j - s]_+,0\} & (x'_i &= 0)
\end{align*}
We show that $\hat{c}_j + \rho_i + \rho_j \ge \hat{c}_i + \rho'_i + \rho'_j$. Note that we have that
\[ \hat{c}_j + \rho_j = \max\{ \hat{c}_j + d_j - \pi, \kappa, \hat{c}_j\} \ge \max\{ \hat{c}_i + d_i- \pi, \kappa, \hat{c}_i\} = \hat{c}_i + \rho'_i\, . \]
\begin{enumerate}
\item  Let us first assume that $d_i \le d_j$. Then, $[d_i - s]_+ \le [d_j - s]_+$. We conclude that
\[ \rho_i = \max\{\kappa - \hat{c}_i - [d_i - s]_+, 0\} \ge  \max\{\kappa - \hat{c}_j - [d_j - s]_+,0\} = \rho'_j \]
which proves the claim in this case.

\item Now consider the case that $d_j \le d_i$. We further distinguish the following cases.
\begin{enumerate}
\item $s \le d_j \le d_i$: It holds that
\[ \rho_i = \max\{\kappa - \hat{c}_i - d_i + s,0 \} \ge \max\{\kappa - \hat{c}_j - d_j +s, 0\} = \rho'_j\, .\]
\item $d_j \le s \le d_i$: We have that $\hat{c}_j + s \ge \hat{c}_i + d_i$. Hence, $\kappa - \hat{c}_i - d_i + s \ge \kappa - \hat{c}_j$ and therefore $\rho_i \ge \rho'_j$.

\item $d_j \le d_i \le s$: It holds that
\[ \rho_i = \max\{\kappa-\hat{c}_i,0\} \ge \max\{\kappa-\hat{c}_j,0\} = \rho'_j\, .\]
\end{enumerate}
As we have $\rho_i \ge \rho'_j$ in all cases, the claim also holds when $d_j \le d_i$, which completes the proof.
\end{enumerate}

\end{proof}

\begin{theorem}
If $\hat{\pmb{c}}$ or $\pmb{d}$ is a vector with constant values, an optimal solution to the balanced regret multi-representative selection problem can be found in polynomial time.
\end{theorem}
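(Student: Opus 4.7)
The plan is to derive both cases almost directly from the dominance lemma (Lemma~\ref{lemma:dominance}). When $\hat{\pmb{c}}$ is constant with value $c$, the hypothesis of Lemma~\ref{lemma:dominance}---namely $\hat{c}_i \le \hat{c}_j$ together with $\hat{c}_i + d_i \le \hat{c}_j + d_j$---collapses to the single condition $d_i \le d_j$, since the first inequality is automatic and the $\hat{c}$-terms cancel from the second. Symmetrically, when $\pmb{d}$ is constant, both conditions of the lemma reduce to $\hat{c}_i \le \hat{c}_j$. In either case, the dominance relation restricted to any partition $T_\ell$ is a total preorder, which suggests that ranking the items of $T_\ell$ by the relevant parameter ($d_i$ in the first case, $\hat{c}_i$ in the second) and picking the $p_\ell$ smallest should be optimal.

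To make this rigorous, the next step is to apply the explicit swap construction used in the proof of Lemma~\ref{lemma:dominance} iteratively. Starting from any optimal solution $\pmb{x}^*$, as long as there exist $i, j \in T_\ell$ with $d_i < d_j$, $x^*_i = 0$, and $x^*_j = 1$ (in the constant-$\hat{\pmb{c}}$ case), set $x^*_i := 1$ and $x^*_j := 0$. Feasibility is preserved because both items lie in the same $T_\ell$, so the partition sum $\sum_{k \in T_\ell} x^*_k$ is unchanged, and the lemma guarantees the objective does not increase. The potential $\Phi(\pmb{x}) = \sum_{i\in[n]} d_i x_i$ strictly decreases by $d_j - d_i > 0$ with each swap, so the procedure terminates after at most $O(n^2)$ iterations. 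At termination, no such inversion remains, which forces the selection in each partition $T_\ell$ to consist of the $p_\ell$ items of smallest $d_i$, ties broken arbitrarily. Rather than running the swaps explicitly, one can obtain this solution directly by sorting each $T_\ell$ by $d_i$, for a total running time of $O(n \log n)$. The constant-$\pmb{d}$ case is handled identically, with $\hat{c}_i$ replacing $d_i$ throughout.

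The only thing to check---and the only potential obstacle---is that the dominance hypothesis really does specialize to a total preorder in both scenarios, but this is immediate once the inequalities are written out. Everything else is a standard exchange argument, so the theorem is essentially a direct corollary of Lemma~\ref{lemma:dominance}.
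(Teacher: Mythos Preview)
Your proof is correct and follows the same approach as the paper, which also derives the result as an immediate consequence of the dominance criterion in Lemma~\ref{lemma:dominance}; you simply spell out the exchange argument that the paper leaves implicit. One minor quibble: the $O(n^2)$ iteration bound does not follow directly from the potential $\Phi(\pmb{x}) = \sum_i d_i x_i$ (the decrements $d_j - d_i$ need not be bounded away from zero), but this is irrelevant since you correctly observe that the target solution can be computed directly by sorting.
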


\begin{proof}
We apply the dominance property from Lemma~\ref{lemma:dominance}.
If $\hat{\pmb{c}}$ or $\pmb{d}$ is constant, an optimal solution can be found by selecting the $p_\ell$ items with smallest $\pmb{d}$ or $\hat{\pmb{c}}$ values for each partition $\ell \in [L]$, respectively.
\end{proof}
Note that the reductions used in the hardness results of Section~\ref{sec:selregrev} do not yield inapproximability bounds in $n$. This means that the existence of approximation algorithms remains a possibility. In particular, we show that we can determine in polynomial time if the optimal objective value is equal to zero, which is a prerequisite for the existence of approximation algorithms.

\begin{theorem}\label{Theo::0Solution}
For $\Gamma, \Gamma'\ge 1$, it is possible to determine in polynomial time if the objective value of the balanced regret problem is equal to zero. If this is the case, we can state an optimal solution.
\end{theorem}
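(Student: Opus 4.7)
The plan is to reduce the question to verifying a single, explicitly constructed candidate $\pmb{x}^*$. By Lemma~\ref{lemma:nonneg}, the adversarial value is always nonnegative, so the balanced regret objective equals zero iff some $\pmb{x}\in\X$ attains adversarial value at most zero.

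I first argue that any such $\pmb{x}$ must pick the $p_\ell$ items of smallest $\hat{c}_i+d_i$ in every partition $T_\ell$. Fix $i \in T_\ell$ with $x_i = 1$ and $j\in T_\ell$ with $x_j = 0$. Since $\Gamma,\Gamma'\ge 1$, the adversary may choose $\pmb{y}$ obtained from $\pmb{x}$ by replacing $i$ with $j$, together with $\delta_i = 1$, and the balancing player's best response sets $\epsilon_j = 1$. The resulting value is $(\hat{c}_i+d_i)-(\hat{c}_j+d_j)$, which must be at most zero; hence $\hat{c}_i+d_i\le\hat{c}_j+d_j$ for every such pair, so $\pmb{x}$ is a per-partition nominal minimizer of $\hat{c}+d$.

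Next, by Lemma~\ref{lemma:dominance}, among optimal solutions there is one satisfying $x_i\ge x_j$ whenever $\hat{c}_i\le\hat{c}_j$ and $\hat{c}_i+d_i\le\hat{c}_j+d_j$. Restricted to the nominal-min items of a partition, where $\hat{c}+d$ is constant across picks, this dominance relation becomes the total order given by $\hat{c}_i$. Consequently, any dominance-respecting per-partition nominal-min selection is uniquely determined up to identical items and coincides with the solution $\pmb{x}^*$ constructed by sorting each $T_\ell$ lexicographically by $(\hat{c}_i+d_i,\hat{c}_i)$ and taking the top $p_\ell$. Thus, if any $\pmb{x}\in\X$ attains balanced regret zero, then so does $\pmb{x}^*$.

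The algorithm is therefore: construct $\pmb{x}^*$ in $O(n\log n)$ time, and evaluate its adversarial value by enumerating $s\in\mathcal{S}$ and solving the LP relaxation of problem~\eqref{eq:revenge4} for each $s$, which is polynomial by Lemma~\ref{lemma:two} (total unimodularity for fixed $s$). Return $\pmb{x}^*$ as an optimal solution if the maximum across $s$ is zero; otherwise report that the optimal objective is strictly positive, which is justified by the previous paragraph. The main obstacle is ruling out that some hypothetical optimal solution evades our canonical candidate; this is resolved exactly because the dominance relation, when restricted to the nominal-min items of a partition, is a total order in $\hat{c}_i$, forcing the dominance-respecting optimum provided by Lemma~\ref{lemma:dominance} to coincide with $\pmb{x}^*$.
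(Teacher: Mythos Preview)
Your proof is correct and follows essentially the same approach as the paper's: both show via a single-swap adversary strategy that any zero-value solution must be a per-partition minimizer of $\hat{c}+d$, invoke Lemma~\ref{lemma:dominance} to single out the lexicographic candidate $\pmb{x}^*$, and then evaluate its adversarial value in polynomial time via Lemma~\ref{lemma:two}. One minor point: the phrase ``where $\hat{c}+d$ is constant across picks'' is imprecise (the nominal-min items of a partition need not all share the same $\hat{c}+d$), but your intended argument---that any ambiguity in the nominal minimizer occurs only at the threshold value, where items share the same $\hat{c}+d$ and are therefore totally ordered by $\hat{c}_i$, forcing the dominance-respecting choice to coincide with $\pmb{x}^*$---is correct and in fact makes explicit a step the paper leaves implicit.
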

\begin{proof}
Consider any solution $\pmb{x}\in\X$ that does not pack the $p_\ell$ cheapest items $i$ with respect to $\hat{c}_i + d_i$ for each set $T_\ell$. Then the adversary can construct a solution $\pmb{y}$ by choosing any such set $\ell$ and exchanging one item $i$ packed by $\pmb{x}$ for another item $j$ with smaller costs, i.e.,  $\hat{c}_j + d_j < \hat{c}_i + d_i$. All other items are packed as in $\pmb{x}$. As $\Gamma, \Gamma' \ge 1$, optimal adversarial and balancing strategies are to attack the one item, that is not packed by each respective solution. As $\hat{c}_j + d_j < \hat{c}_i + d_i$, the objective value is larger than zero. 

We conclude that if there exists a solution with objective value equal to zero, then this solution must pack the cheapest items with respect to $\hat{\pmb{c}} + \pmb{d}$. Due to the dominance criterion from Lemma~\ref{lemma:dominance}, such a solution can be found by lexicographically sorting each set primarily by $\hat{\pmb{c}} + \pmb{d}$ and secondarily by $\hat{\pmb{c}}$. The objective value of this solution can be checked by solving the adversarial problem \eqref{eq:revenge4}, which can be done in polynomial time due to Lemma~\ref{lemma:two}.
\end{proof}

We now consider the classic regret setting without balancing, i.e., $\Gamma'=0$. The complexity of this problem is currently open. We show that this case is solvable in polynomial time. Hence, in combination with the hardness of the general case, we see that the additional balancing stage does increase the complexity of our problem.

Consider the case $\Gamma'=0$ in formulation~\eqref{eq:revenge5}. By rewriting the problem, we find the following compact program for the regret problem.
\begin{subequations}
\label{eq:regrep}
\begin{align}
\min\ &\sum_{i\in[n]} \hat{c}_i x_i + \Gamma \pi + \sum_{i\in[n]} \rho_i - \sum_{\ell \in [L]} p_\ell \kappa_\ell \\
\text{s.t. } & \pi + \rho_i \ge d_i x_i & \forall i\in[n] \\
& \hat{c}_i + \rho_i \ge \kappa_\ell & \forall \ell\in[L], i\in T_\ell \\
& \sum_{i\in T_\ell} x_i = p_\ell & \forall \ell \in [L] \\
& x_i \in \{0,1\} & \forall i\in[n] \\
& \pi \ge 0 \\
& \kappa_\ell \ge 0 & \forall \ell \in [L] \\
& \rho_i \ge 0 & \forall i\in[n]
\end{align}
\end{subequations}
Note that we can assume $\rho_i = \max\{d_ix_i - \pi, \kappa_\ell - \hat{c}_i, 0\}$ in an optimal solution to problem~\eqref{eq:regrep}, where $i\in T_\ell$.

\begin{theorem}
Min-max regret multi-representative selection with budgeted uncertainty can be solved in $O(n^5)$.
\end{theorem}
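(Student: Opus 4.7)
My plan is to leverage the compact programme~\eqref{eq:regrep} and reduce the problem to a polynomial number of elementary selection subproblems by enumerating two scalars: the dual variable $\pi$ (of the adversary's budget constraint $\sum_i\delta_i\le\Gamma$) and, for each partition $\ell\in[L]$, the dual variable $\kappa_\ell$ (of the cardinality constraint $\sum_{i\in T_\ell} y_i=p_\ell$). Following the standard Bertsimas--Sim argument already used in Lemma~\ref{lemma:revenge1}, an optimal value of $\pi$ can be restricted to the finite set $\mathcal{S}=\{0\}\cup\{d_i:i\in[n]\}$ of size $O(n)$. Indeed, after substituting $\rho_i=\max\{d_ix_i-\pi,\kappa_\ell-\hat{c}_i,0\}$ for $i\in T_\ell$, the objective of \eqref{eq:regrep} becomes piecewise linear in $\pi$ with break points exclusively in $\mathcal{S}$, so it suffices to enumerate.

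Once $\pi$ is fixed, the formulation \eqref{eq:regrep} decouples across the $L$ partitions, as $\pi$ is the sole coupling variable. Within partition $\ell$, viewing the substituted objective as a function of $\kappa_\ell$ produces a piecewise-linear function whose break points, irrespective of the choice of $\pmb{x}$, lie in the $\pmb{x}$-independent set
\[
\mathcal{K}^\pi_\ell \;=\; \{0\}\cup\{\hat{c}_i:i\in T_\ell\}\cup\{\hat{c}_i+d_i-\pi:i\in T_\ell,\ d_i>\pi\},
\]
which has at most $2|T_\ell|+1$ elements. Hence an optimal $\kappa_\ell$ can be found by enumeration over $\mathcal{K}^\pi_\ell$.

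For each fixed triple $(\pi,\ell,\kappa_\ell)$, the partition subproblem reduces to a classical weighted selection: for each $i\in T_\ell$ one computes in constant time the marginal cost of selection
\[
c_i(\pi,\kappa_\ell) \;=\; \hat{c}_i+\max\{d_i-\pi,\kappa_\ell-\hat{c}_i,0\}-\max\{\kappa_\ell-\hat{c}_i,0\},
\]
obtained by comparing the objective contribution of $i$ under $x_i=1$ and $x_i=0$. The optimal restriction of $\pmb{x}$ to $T_\ell$ then packs the $p_\ell$ items with the smallest $c_i(\pi,\kappa_\ell)$, for example via sorting in $O(|T_\ell|\log|T_\ell|)$. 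Combining the three enumerations yields a total running time of
\[
O(n)\cdot\sum_{\ell\in[L]} O(|T_\ell|)\cdot O(|T_\ell|\log|T_\ell|) \;=\; O\bigl(n^3\log n\bigr) \;\subseteq\; O(n^5).
\]

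The main subtlety I expect is the justification that $\mathcal{K}^\pi_\ell$ is a valid $\pmb{x}$-independent candidate set: one must show that for every $\pmb{x}\in\{0,1\}^{|T_\ell|}$ the break points of the associated piecewise-linear function in $\kappa_\ell$ are contained in $\mathcal{K}^\pi_\ell$, so that the union of all such break points over $\pmb{x}$ remains of size $O(|T_\ell|)$. A secondary point is to check that the substitution $\rho_i=\max\{d_ix_i-\pi,\kappa_\ell-\hat{c}_i,0\}$ introduces no integrality gap; this is immediate since the $\rho_i$ are continuous in \eqref{eq:regrep} and the binary variables $x_i$ are untouched by the substitution, while the argument in Lemma~\ref{lemma:two} already ensures the partition-wise structure behaves well.
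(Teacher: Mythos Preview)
Your argument contains a genuine gap: the claim that an optimal $\pi$ always lies in $\mathcal{S}=\{0\}\cup\{d_i:i\in[n]\}$ is false. The standard Bertsimas--Sim argument applies when the dual variable $\rho_i$ is pinned only by $\rho_i\ge d_ix_i-\pi$ and $\rho_i\ge 0$; in programme~\eqref{eq:regrep}, however, $\rho_i$ is \emph{additionally} constrained by $\rho_i\ge \kappa_\ell-\hat{c}_i$. After substituting $\rho_i=\max\{d_ix_i-\pi,\kappa_\ell-\hat{c}_i,0\}$, the break points in $\pi$ (for fixed $\pmb{\kappa}$) therefore include the values $\hat{c}_i+d_ix_i-\kappa_\ell$, not only the values $d_ix_i$. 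This is precisely the set $P_2(\pmb{\kappa})$ that the paper identifies and treats separately in its Case~2; that case is the source of the $O(n^5)$ bound you are trying to prove.

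To see that restricting to $\mathcal{S}$ can give the wrong answer, take $L=1$, $n=4$, $p=2$, $\Gamma=1$, $\hat{\pmb c}=(0,0,4,6)$ and $\pmb d=(10,12,0,0)$. The unique optimum is $\pmb x^*=(1,1,0,0)$ with regret~$8$ (adversary sets $\delta_2=1$ and chooses $\pmb y=\{1,3\}$). For this $\pmb x^*$ one computes
\[
h_{\pmb x^*}(\pi)=\min_{\kappa\ge 0}f(\pmb x^*,\pi,\kappa)=
\begin{cases}
12-\pi,&0\le\pi\le 4,\\
8,&4\le\pi\le 8,\\
\pi,&\pi\ge 8,
\end{cases}
\]
so the minimum over $\pi\ge 0$ is $8$, attained only on $[4,8]$. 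Since $\mathcal{S}=\{0,10,12\}$, your enumeration evaluates $h_{\pmb x^*}$ only at $0,10,12$, obtaining $\min\{12,10,12\}=10$. Checking the remaining five feasible $\pmb x$ shows that all of them have true regret $\ge 10$, and that your restricted objective $\min_{\pi\in\mathcal{S}}h_{\pmb x}(\pi)$ is also $\ge 10$ for each of them. Hence your algorithm reports optimal value $10$ instead of $8$ and cannot certify the optimal solution.

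The paper's proof avoids this by explicitly allowing $\pi\in P_2(\pmb\kappa)$: it fixes the item index $k$ and partition $j$ that realise $\pi=\hat{c}_k+d_k-\kappa_j$, and then exploits the resulting coupling between $\pi$ and $\kappa_j$ to bound the number of relevant $\kappa_j$-values by $O(n^2)$, which together with the $O(n)$ choices for $(j,k)$ and the $O(\sum_\ell|T_\ell|^2)$ cost of the partition subproblems yields $O(n^5)$. Your Case~1 analysis (the $O(n^3)$ part) is essentially correct and coincides with the paper's Case~1, but without Case~2 the proof is incomplete.
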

\begin{proof}
Let us first assume that $\pmb{x}$ and $\pmb{\kappa}$ are fixed. The remaining problem only in variable $\pi$ is then to solve
\[ \min_{\pi \ge 0} \Gamma\pi + \sum_{\ell\in [L]}\sum_{i\in T_\ell} \max\{d_ix_i - \pi, \kappa_\ell - \hat{c}_i, 0\}\, . \]
Note that the objective function of this problem is piece-wise linear. Hence, an optimal value of $\pi$ is contained in the set of kink points, which is a subset of
\begin{align*}
P(\pmb{\kappa}) &= P_1 \cup P_2(\pmb{\kappa}) \\
\text{with } \qquad P_1 &= \{0\} \cup \{d_i : i\in[n]\} \\
P_2(\pmb{\kappa}) &= \{\hat{c}_i + d_i - \kappa_\ell : \ell\in[L], i\in T_\ell\} \, .
\end{align*}
Note that the set $P(\pmb{\kappa})$ does not depend on the choice of $\pmb{x}$.

Now let us assume that $\pmb{x}$ and $\pi$ are fixed. Then it is possible to split problem~\eqref{eq:regrep} into independent subproblems. That is, for each set $\ell\in[L]$, the remaining problems only in $\kappa_\ell$ are of the form
\[ 
\min_{\kappa_\ell \ge 0} \sum_{i\in T_\ell} \max\{d_ix_i - \pi, \kappa_\ell - \hat{c}_i, 0\} - p_\ell \kappa_\ell \, . \]
Note that this problem is piecewise-linear in $\kappa_\ell$. Hence, there exists an optimal solution where $\kappa_\ell$ is at one of the kink points, which are contained in the set
\[ K_\ell(\pi) = \{0\}\cup \{ \hat{c}_i : i\in T_\ell\} \cup \{ \hat{c}_i - \pi : i\in T_\ell \} \cup \{ \hat{c}_i + d_i - \pi : i\in T_\ell \} \, .\]
We can make the following case distinctions.
\begin{enumerate}
\item First consider the case that we choose $\pi\in P_1=\{0\} \cup \{d_i : i\in[n]\}$. Then, we decompose problem~\eqref{eq:regrep} into independent subproblems for each $\ell\in[L]$. For each subproblem, there are $O(|T_\ell|)$ many possible values for $\kappa_\ell$ to enumerate. For each choice of $\kappa_\ell$, the remaining problem in $\pmb{x}$ only can be solved in $O(|T_\ell|)$. As there are $O(n)$ many values for $\pi$ to check, this case requires a total time in $O(n\cdot \sum_{\ell\in[L]} |T_\ell|^2) = O(n^3)$.

\item Now consider the case that we want to choose some $\pi \in P_2(\pmb{\kappa}) = \{\hat{c}_i + d_i - \kappa_\ell : \ell\in[L], i\in T_\ell\}$. We model this choice through the index of item that defines $\pi$, that is, we set $\pi = \hat{c}_k + d_k - \kappa_j$ for a specific choice of $j\in[L]$ and $k\in T_j$. Let us assume for now that $\kappa_j$ is fixed. For each $\ell\neq j$ the subproblem for arbitrary $x\in \{0,1\}^n$ becomes
\[ \min_{\kappa_\ell \ge 0} \sum_{i\in T_\ell} \max\{d_ix_i - (\hat{c}_k + d_k - \kappa_j), \kappa_\ell - \hat{c}_i, 0\} - p_\ell \kappa_\ell \, .\]
Hence, an optimal choice for $\kappa_\ell$ is contained in the set
\begin{align*}
K_\ell &= K_1 \cup K_2 \\
\text{with } \qquad
K_1 &= \{0\} \cup \{\hat{c}_i : i\in T_\ell\} \\
K_2 &= \{\hat{c}_i + d_i - \hat{c}_k - d_k + \kappa_j  : i\in T_\ell\}\, .
\end{align*}
Let $U\subseteq[L]\setminus\{j\}$ be the set of indices where we choose $\kappa_\ell \in K_1$ and $V\subseteq[L]\setminus\{j\}$ be the set of indices where we choose $\kappa_\ell \in K_2$. In the first case, we write $\kappa_\ell = u_\ell$ for some constant $u_\ell$ and in the second case, we write $\kappa_\ell = v_\ell + \kappa_j$ for some constant $v_\ell$.
The problem in $\kappa_j$ is then
\begin{align*}
\min_{\kappa_j \ge 0}\ &\sum_{i\in T_j} \max\{d_ix_i - (\hat{c}_k + d_k - \kappa_j), \kappa_j - \hat{c}_i, 0\} - p_j \kappa_j \\
+ &\sum_{\ell \in V} \sum_{i\in T_\ell} \max\{d_ix_i - (\hat{c}_k + d_k - \kappa_j), v_\ell + \kappa_j - \hat{c}_i , 0\} \, .
\end{align*}
Again, this is a piecewise linear optimization problem, where an optimal solution is at one of the kink points, contained in
\begin{align*}
 K_j =& \{0\} \cup  \{\hat{c}_i : i\in T_j\} 
 \cup \{ \hat{c}_i - v_\ell : \ell \in V, i\in T_\ell \} 
 \cup \{ \hat{c}_k + d_k - d_i : i\in [n] \}\\
 \subseteq& \{0\} \cup  \{\hat{c}_i : i\in T_j\} 
 \cup \{ \hat{c}_i -\hat{c}_{i'}- d_{i'} +\hat{c}_k+d_k : \ell \in [L], i,i'\in T_\ell \} \\
& \cup \{ \hat{c}_k + d_k - d_i : i\in [n] \}  \, .\end{align*}
In total, there are $O(n)$ possible values to choose $j$ and $k$ to determine $\pi$. There are $O(n^2)$ possible values to determine $\kappa_j$. The subproblem in each $\ell\neq j$ can then be solved in $O(|T_\ell|^2)$. Hence, this case can be solved in $O(n^3 \cdot \sum_{\ell\in[L]} |T_\ell|^2) = O(n^5)$.
\end{enumerate}
\end{proof}

\section{Experiments}
\label{sec:experiments}

In order to demonstrate the practical implications of introducing the balancing option we perform experiments using selection and knapsack problems with randomly created data, and shortest path problems with real-world data. We examine the computational tractability by comparing different modeling and solving techniques and illustrate the added value of the extension by showing how such solutions can yield a trade-off between solutions of the worst case approach (with interval uncertainty) and solution that are optimal with respect to a minimal regret with $\Gamma$ uncertainty. 

We make use of the following solution methods (see Section~\ref{SubSec::SolutionMethods}): the enumeration approach (where all scenarios are used in one model), the iterative method (where scenarios are generated one by one), solving the problem as a quantified integer program with Yasol, and solving a compact mixed-integer programming formulation, if available.
For solving mixed-integer programs, we use CPLEX 12.9.0 restricted to a single thread, but otherwise default settings. For solving the quantified programming formulation we use the latest version of the open-source solver Yasol running CPLEX 12.6.1 as its LP solver. 
All experiment were executed on a desktop computer with AMD Ryzen 9 5900X processors and 128GB RAM with a time limit of 1800 seconds per instance.

\subsection{Selection Problem}
First, we investigate the selection problem, i.e. $\mathcal{X}=\{\pmb{x} \in \{0,1\}^n \mid \sum_{i \in [n]}x_i=p\}$, with $p=\lfloor \frac{n}{2}\rfloor$. Instances are created by selecting the nominal cost $\hat{c}_i$  for each item $i$ uniformly random (u.r.) from the set $\{1,\ldots,100\}$. The additional cost $d_i$ is selected u.r. from $\{0,\ldots,99\}$. 

We are interested in the runtimes of the four available solution techniques, which are solving the fully enumerated robust counterpart, solving the quantified programming formulation, using the iterative method 
 and solving the compact formulation. In Figure~\ref{Fig::TimePlotSelectionGammaConst} 
we show for fixed $\Gamma=2$ and $\Gamma'=1$ the median of the runtimes for each solution techniques for $50$ instances per $n\in\{8,10,\ldots,20,30,\ldots, 200\}$.

\begin{figure}[h!]
\centering
\includegraphics[scale=.6]{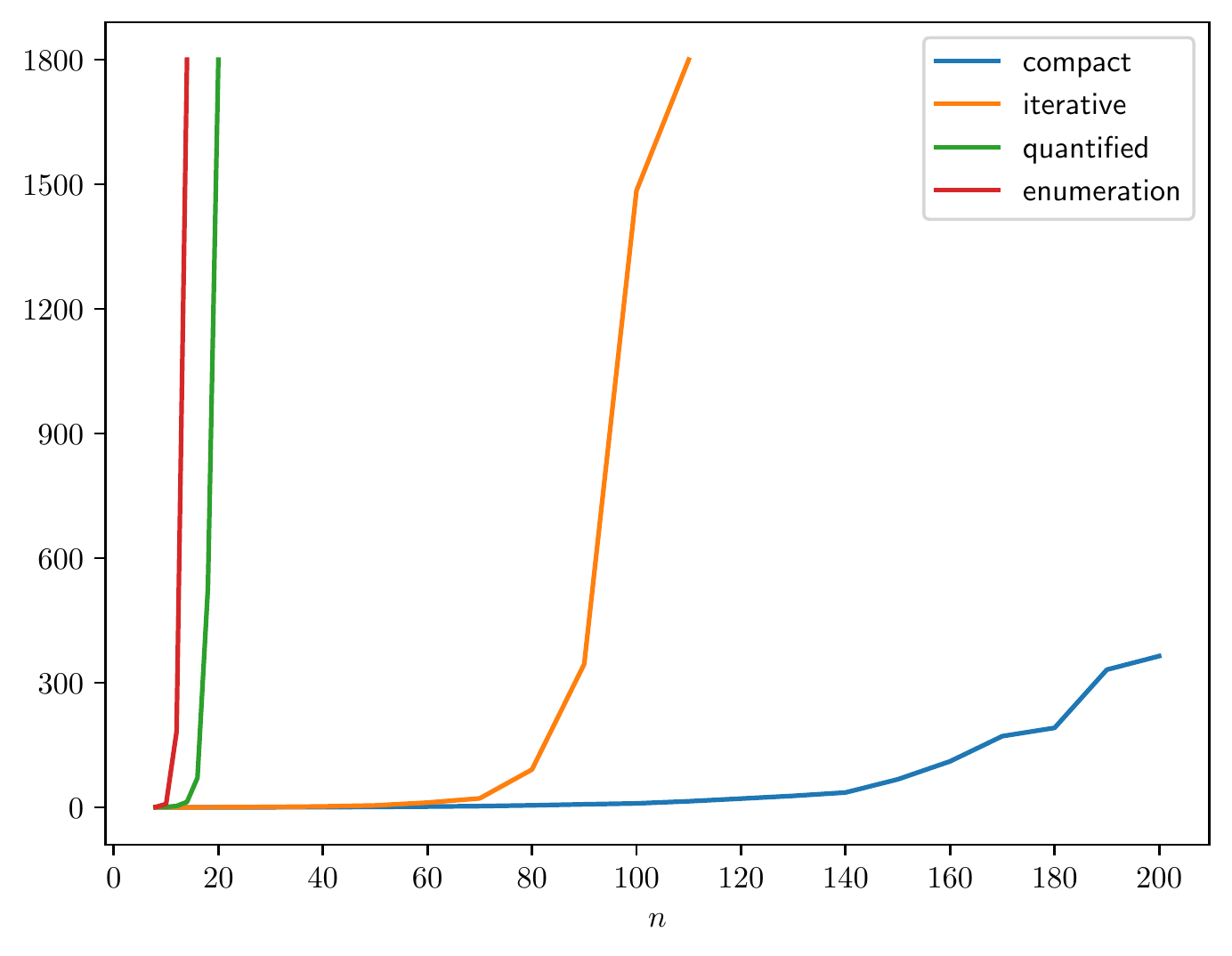}
\caption{Median runtimes on selection instances with $\Gamma=2$ and $\Gamma'=1$.\label{Fig::TimePlotSelectionGammaConst}}
\end{figure}
The two general model-and-run approaches quickly hit the timelimit: For $n=14$ and $n=20$ most of the robust enumeration and quantified instances, respectively, cannot be solved within half an hour. As expected, the compact formulation can be solved most efficiently and even for $n=200$ the runtime of most instances remains far below the timelimit. The iterative method still can deal with instances with up to $n=100$ but then more than half of the instances also exceed the timelimit.

For $n \in \{20,\ldots,200\}$ we examine how the runtimes of the compact and iterative approach scale for non-constant $\Gamma=\frac{n}{5}$ and $\Gamma'=\frac{n}{10}$. In Table~\ref{Tab::SelGammaO} the median and mean runtimes in seconds, and percentage of solved instances for the compact and iterative approach is shown. Furthermore, we display the median runtimes and the number of solved instances dependent on their objective value for the compact and iterative approach, respectively. 
\begin{table}[h!]
\centering
\footnotesize
\caption{Data on selection instances with $\Gamma=\frac{n}{5}$ and $\Gamma'=\frac{n}{10}$.\label{Tab::SelGammaO}}
\begin{tabular}{r|rrrrrrr|rrrrr}
     & \multicolumn{7}{c|}{compact}      & \multicolumn{5}{c}{iterative}  \\
     &         &    & &    \multicolumn{2}{c}{solved}        & \multicolumn{2}{c|}{median}        &           &        &        & \multicolumn{2}{c}{solved}         \\
 $n$ & median  & mean  & solved & $>0$   & $=0$ &   $>0$  & $=0$ & median    & mean   & solved & $>0$   & $=0$ \\\midrule
20 & 0.0 & 0.1 & 1.00 & 31 & 19 & 0.1 & 0.0 & 0.3 & 1.0 & 1.00 & 31 & 19 \\
30 & 0.1 & 0.2 & 1.00 & 31 & 19 & 0.2 & 0.0 & 3.0 & 33.2 & 1.00 & 31 & 19 \\
40 & 0.2 & 0.3 & 1.00 & 29 & 21 & 0.4 & 0.1 & 22.2 & 275.3 & 0.94 & 26 & 21 \\
50 & 0.5 & 1.1 & 1.00 & 31 & 19 & 1.5 & 0.2 & 1800.0 & 1046.6 & 0.44 & 3 & 19 \\
60 & 1.6 & 2.8 & 1.00 & 36 & 14 & 2.5 & 0.3 & 1800.0 & 1286.9 & 0.30 & 1 & 14 \\
70 & 2.4 & 5.0 & 1.00 & 32 & 18 & 3.9 & 0.5 & 1800.0 & 1167.6 & 0.38 & 1 & 18 \\
80 & 4.2 & 5.7 & 1.00 & 32 & 18 & 6.6 & 0.7 & 1800.0 & 1166.0 & 0.36 & 0 & 18 \\
90 & 3.6 & 13.8 & 1.00 & 27 & 23 & 12.6 & 1.2 & 1800.0 & 1010.9 & 0.46 & 0 & 23 \\
100 & 5.7 & 18.5 & 1.00 & 30 & 20 & 15.2 & 1.8 & 1800.0 & 1172.9 & 0.36 & 0 & 18 \\
110 & 9.5 & 31.0 & 1.00 & 30 & 20 & 23.2 & 2.2 & 1800.0 & 1379.5 & 0.26 & 0 & 13 \\
120 & 18.7 & 41.1 & 1.00 & 34 & 16 & 36.8 & 2.7 & 1800.0 & 1537.2 & 0.24 & 0 & 12 \\
130 & 27.1 & 58.6 & 1.00 & 33 & 17 & 51.3 & 3.9 &  -       &  -       &  -     &  -   &  -   \\
140 & 26.9 & 82.4 & 0.98 & 30 & 19 & 55.4 & 4.3 &  -       &  -       &  -     &  -   &  -   \\
150 & 44.1 & 108.8 & 1.00 & 30 & 20 & 135.4 & 5.7 &  -       &  -       &  -     &  -   &  -   \\
160 & 67.1 & 150.8 & 1.00 & 38 & 12 & 122.5 & 11.2 &  -       &  -       &  -     &  -   &  -   \\
170 & 141.3 & 206.8 & 1.00 & 42 & 8 & 175.1 & 6.9 &  -       &  -       &  -     &  -   &  -   \\
180 & 58.6 & 135.0 & 1.00 & 33 & 17 & 123.2 & 10.8 &  -       &  -       &  -     &  -   &  -   \\
190 & 75.7 & 209.1 & 1.00 & 35 & 15 & 161.1 & 12.4 &  -       &  -       &  -     &  -   &  -   \\
200 & 199.5 & 307.8 & 0.98 & 37 & 12 & 353.7 & 14.9 &  -       &  -       &  -     &  -   &  -  
\end{tabular}
\end{table}
While the ratio of instances with non-zero and zero nearly stays the same for increasing $n$, the number of solved instance with non-zero objective value using the iterative method quickly decreases. Similarly, when solving the compact formulation, instances with non-zero objective take considerably more time to be solved. While the overall median runtime of the compact formulation seems to scale better compared to constant $\Gamma$ and $\Gamma'$, when comparing the median runtime on the non-zero instances, the results are similar to the ones shown in Figure~\ref{Fig::TimePlotSelectionGammaConst}.  
\begin{figure}[h!]
\centering
\includegraphics[width=.49\textwidth]{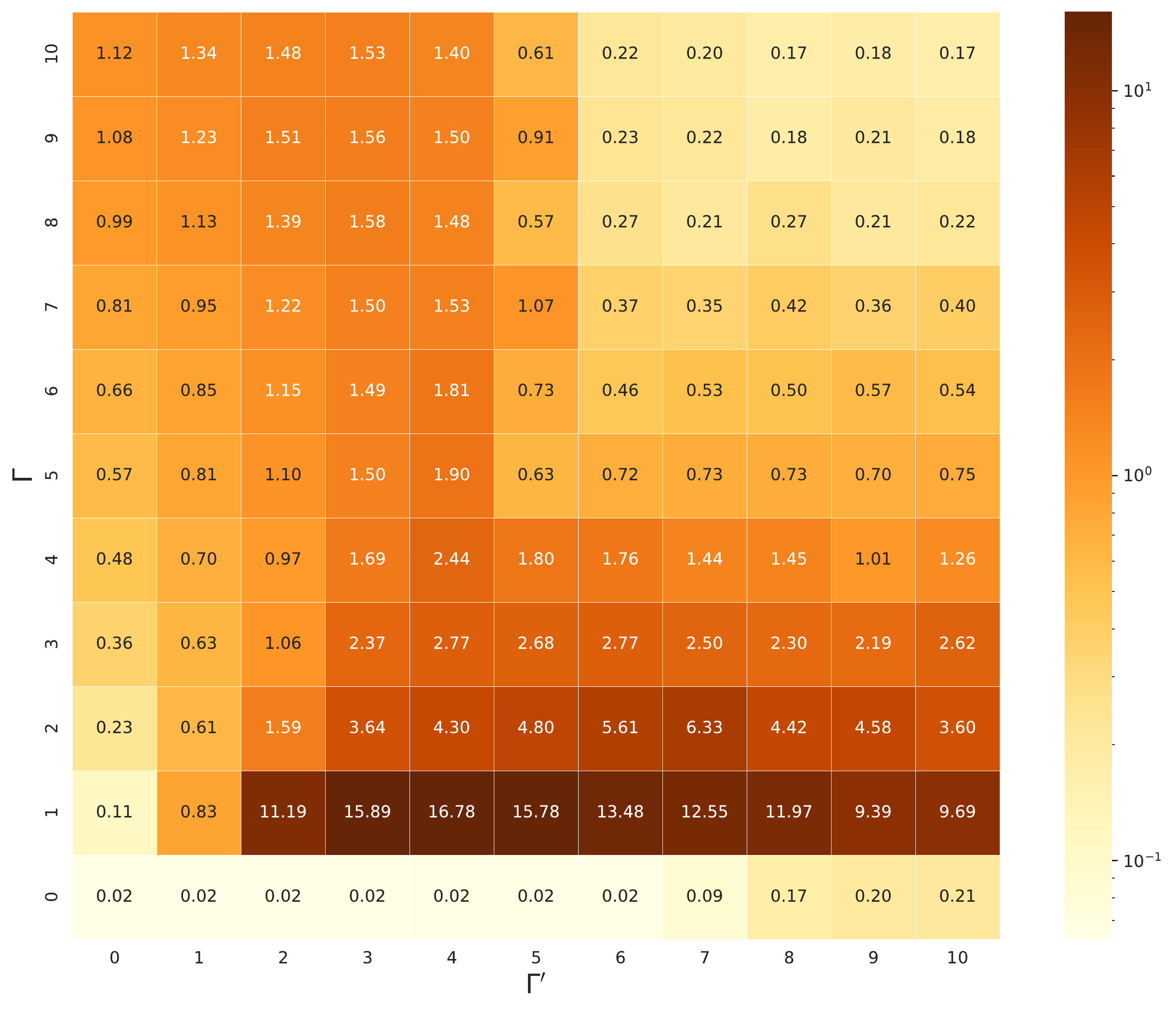} \includegraphics[width=.49\textwidth]{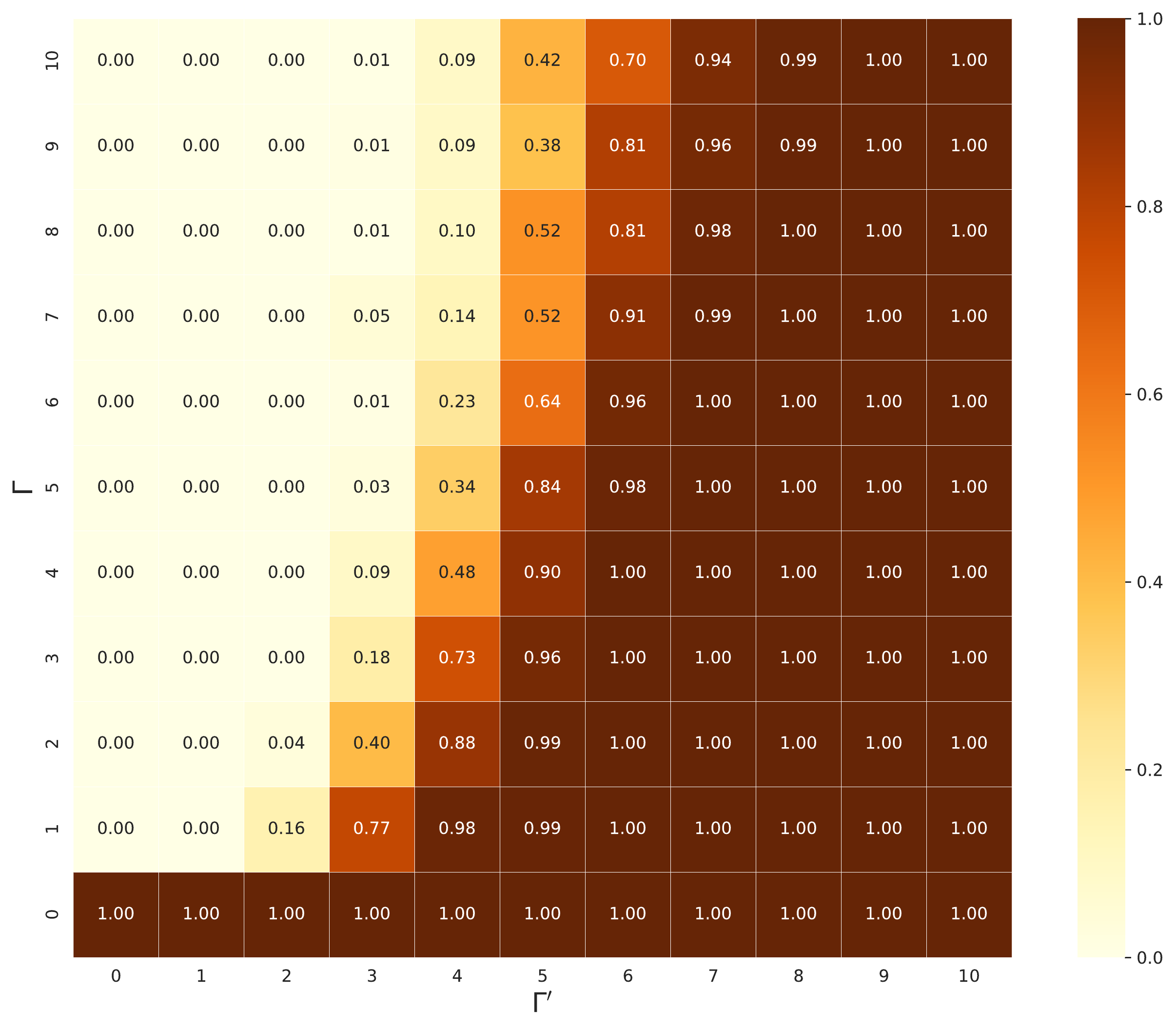}

\caption{Median runtime on the compact formulation of selection instances with $n=50$ and various $\Gamma$ and $\Gamma'$ (left) and the percentage of instances with objective value $0$ (right).\label{Fig::SelectionN50}}
\end{figure}

In order to grasp what constellations of $\Gamma$ and $\Gamma'$ are particularly hard to solve, we set $n=50$ and let  $\Gamma$ and $\Gamma'$ take all values from $0$ to $10$. For each constellation we run $200$ instances of the compact formulation and show the respective median runtime in Figure~\ref{Fig::SelectionN50} (left). All instances were solved to optimality. On the right-hand side of this figure, the percentage of instances with objective value of $0$ are shown. Even for $\Gamma<\Gamma'$ we find several instances that have a non-zero objective value. 
Interestingly, 
the most difficult instances for CPLEX have
$1=\Gamma<\Gamma'$, even though for $\Gamma'\geq 4$, almost all instances have an objective value of zero. Here our findings of Theorem~\ref{Theo::0Solution} can come to a practical use, as we know that for such instances, a polynomial time algorithm exists, which means that the solution by CPLEX is not necessary in the first place.

Furthermore, we want to understand the performance of the balanced regret approach with respect to other optimization approaches. To this end, we evaluate optimal solutions of the balanced regret approach (\RR{$\Gamma'$}) using five different evaluation criteria: The best case case (\BC) that only uses the nominal costs $\hat{\pmb{c}}$, the worst-case optimization approach (with interval uncertainty) (\WCI) that assumes worst case costs $\hat{\pmb{c}}+\pmb{d}$ for every selected item, the worst-case optimization approach with budgeted uncertainty (\WCGamma), allowing at most $\Gamma$ items to become expensive,  regret minimization (with interval uncertainty) (\RegretI) where all selected items have worst case costs $\hat{\pmb{c}}+\pmb{d}$, and the regret approach with $\Gamma$ uncertainty (\RegretGamma). Note that \RR{0} is the same as \RegretGamma. In Table~\ref{Tab::QualitySelection-left} and Figure~\ref{Tab::QualitySelection-right} the quality of the different robust optimization frameworks is displayed by showing the average relative difference (percentage change) of the solution within each framework compared to the respective optimal solution over $1000$ instances with $n=60$ and $\Gamma=15$. The rows indicate the used solution methods and the columns the evaluation criteria.   

\begin{table}[htb]
\centering
\footnotesize
\caption{Average relative difference of the different optimization approaches on selection instances compared to the optimal solutions of \BC \WCI, \WCGamma, \RegretI and \RegretGamma. \label{Tab::QualitySelection-left}}
\begin{tabular}{llllll}
          & \BC    & \WCI & \WCGamma & \RegretI & \RegretGamma \\
          \midrule
\BC        & 0.000 & 0.124 & 0.062     & 0.233     & 0.228  \\
\WCI     & 0.322 & 0.000 & 0.015     & 0.205     & 0.203  \\
\WCGamma & 0.178 & 0.018 & 0.000     & 0.098     & 0.098  \\
\RegretI & 0.085 & 0.030 & 0.013     & 0.000     & 0.000  \\
\RegretGamma    & 0.084 & 0.030 & 0.013     & 0.000     & 0.000  \\
\RR{1}     & 0.086 & 0.029 & 0.013     & 0.001     & 0.000  \\
\RR{2}     & 0.090 & 0.028 & 0.012     & 0.002     & 0.002  \\
\RR{3}    & 0.098 & 0.026 & 0.011     & 0.005     & 0.005  \\
\RR{4}     & 0.115 & 0.022 & 0.009     & 0.018     & 0.018  \\
\RR{5}     & 0.156 & 0.015 & 0.009     & 0.052     & 0.052  \\
\RR{6}     & 0.224 & 0.007 & 0.010     & 0.114     & 0.113  \\
\RR{7}     & 0.281 & 0.002 & 0.013     & 0.168     & 0.167  \\
\RR{8}    & 0.308 & 0.000 & 0.014     & 0.191     & 0.190  \\
\RR{9}   & 0.318 & 0.000 & 0.015     & 0.200     & 0.199  \\
\RR{10}   & 0.320 & 0.000 & 0.015     & 0.202     & 0.200
\end{tabular}
\end{table}

\begin{figure}[htb]
\centering
\begin{tikzpicture}

\begin{axis}[
  ylabel = {\RegretGamma},
  ylabel shift = -3 pt,
    xlabel = {\WCI},
    xtick distance={0.01},
    ymax=0.21,
    xmax=0.035,
     width=.5\textwidth,
    ytick distance={0.1},
    enlargelimits=false,
       y tick label style={
        /pgf/number format/.cd,
            fixed,
            fixed zerofill,
            precision=1,
        /tikz/.cd,
    },
    x tick label style={
        /pgf/number format/.cd,
            fixed,
            fixed zerofill,
        /tikz/.cd,
        below=1mm
    },
    scaled ticks=false
]
\addplot+[
    only marks,
    color=black,
    scatter,
    mark=*,
    mark size=2.9pt, 
    point meta=explicit symbolic,
    visualization depends on=\thisrow{alignment} \as \alignment,
    visualization depends on=\thisrow{xshift} \as \xshift,
    visualization depends on=\thisrow{yshift} \as \yshift,
    nodes near coords,
     every node near coord/.style={anchor=\alignment, xshift=\xshift, yshift=\yshift}]
table[meta=pin]
{selectionScatter.dat};
\end{axis}
\end{tikzpicture}
\captionof{figure}{Evaluation of the \RR{$\Gamma'$} selection solutions with respect to \WCI and  \RegretGamma.
\label{Tab::QualitySelection-right}}
\end{figure}

It can be seen that for increasing $\Gamma'$ the optimal solutions of \RR{$\Gamma'$} also become optimal with respect to \WCI. Note that the row of \RR{10} is almost identical to the row corresponding to the original \WCI solution. The \BC, \RegretI, and \RegretGamma, evaluation of \RR{$\Gamma'$} become worse for increasing $\Gamma'$. But the evaluation values from \RR{0} to \RR{10} do not behave monotonously in general: the \WCGamma value of the \RR{$\Gamma'$} solutions first decreases, before it increases up to the \WCGamma value of \WCI. 
To illustrate the trade-off the balanced regret approach constitutes for \WCI and \RegretGamma, the scatter plot of the \WCI and \RegretGamma evaluations of all \RR{$\Gamma'$} solutions is shown in Figure~\ref{Tab::QualitySelection-right}.

\subsection{Knapsack}
We also consider knapsack instances, i.e. $\mathcal{X}= \{\pmb{x}\in\{0,1\}^n \mid \sum_{i \in [n]} w_ix_i \leq C\}$. We create \textit{almost strongly correlated} instances as proposed in \cite{furini2015heuristic}. The item weights $w_i$ of the items are selected u.r. from $\{1,\ldots,\bar{R}\}$, with $\bar{R}=1000$. The nominal profit $\hat{c}_i$ of item $i$ is then selected u.r. from $\{\lceil 0.8p_i \rceil,\ldots,\lceil p_i \rceil \}$ with $p_i$ u.r. from $\{w_i + \frac{\bar{R}}{10}-\frac{\bar{R}}{500},w_i + \frac{\bar{R}}{10}+\frac{\bar{R}}{500} \}$. The potential decrease $d_i$ in the profit of item $i$ is then selected u.r. from $\{\lceil p_i \rceil-\hat{c}_i,\ldots,\lceil 1.2p_i \rceil -\hat{c}_i \}$. 

Again, we first consider the three available solution approaches of solving the fully enumerated counterpart, the quantified program as well as using the iterative solution method. In Table~\ref{Tab::KnapsackTo18} for $n \in\{6,\ldots,18\}$, $\Gamma=2$ and $\Gamma'=1$ the median runtimes are shown with $50$ instances per entry. While solving the quantified program requires less time than solving the fully enumerated counterpart, both approaches quickly hit the timelimit of half an hour. For the iterative method, on the other hand, even the longest runtime for instances with $n<20$ is below two seconds. Hence, in the following experiments we only use the iterative method. 
\begin{table}[h!]
\centering
\footnotesize
\caption{Median runtime of knapsack instances with $\Gamma=2$ and $\Gamma'=1$ in seconds.\label{Tab::KnapsackTo18}}
\begin{tabular}{lrrr}
 $n$  & iterative   & quantified     & enumeration       \\\midrule
6  & 0.01 & 0.06    & 0.06    \\
8  & 0.03 & 0.72    & 1.24    \\
10 & 0.04 & 1.70    & 17.07   \\
12 & 0.07 & 9.61    & 270.29  \\
14 & 0.07 & 52.44   & 1800.00 \\
16 & 0.13 & 315.74  &   -      \\
18 & 0.18 & 1800.00 &   -   
\end{tabular}
\end{table}

To understand the impact of different values of $\Gamma$ and $\Gamma'$, we fix $n=40$ and vary $\Gamma,\Gamma' \in \{0,1,\ldots,10\}$. For each constellation we solve $200$ instances using the iterative method and show the respective median runtime in Figure~\ref{Fig::KnapsackN40} (left). On the right-hand side of this figure the percentage of instances with objective value of $0$ are shown. For the iterative method instances with large $\Gamma$ that have a non-zero objective value are particularly hard to solve. Furthermore, for $\Gamma \in \{2,3\}$, even though the objective value is often zero for $\Gamma'>\Gamma$, the runtime is slightly increased in comparison to other values of $\Gamma$. 

\begin{figure}[h!]
\centering
\includegraphics[width=.49\textwidth]{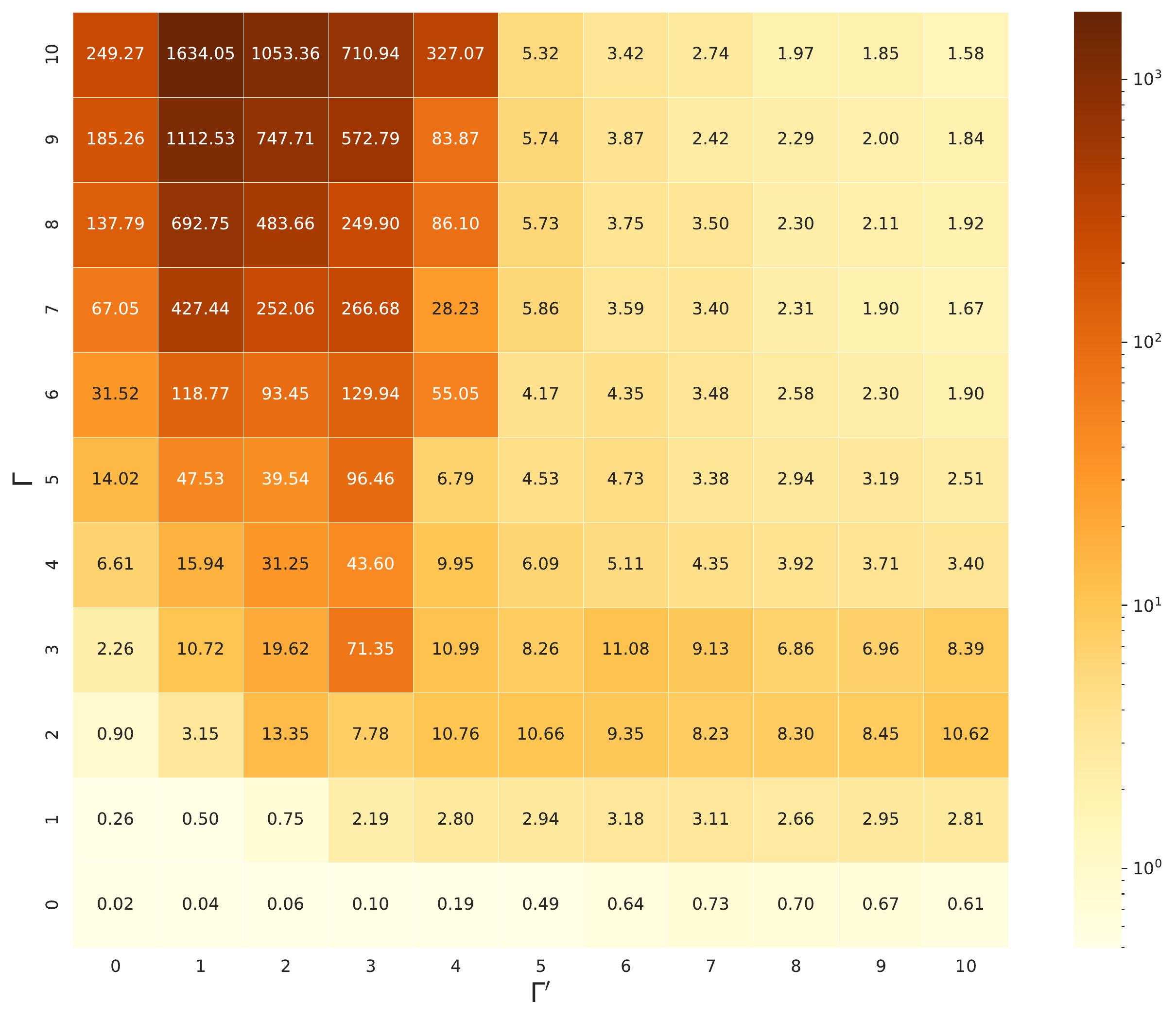} \includegraphics[width=.49\textwidth]{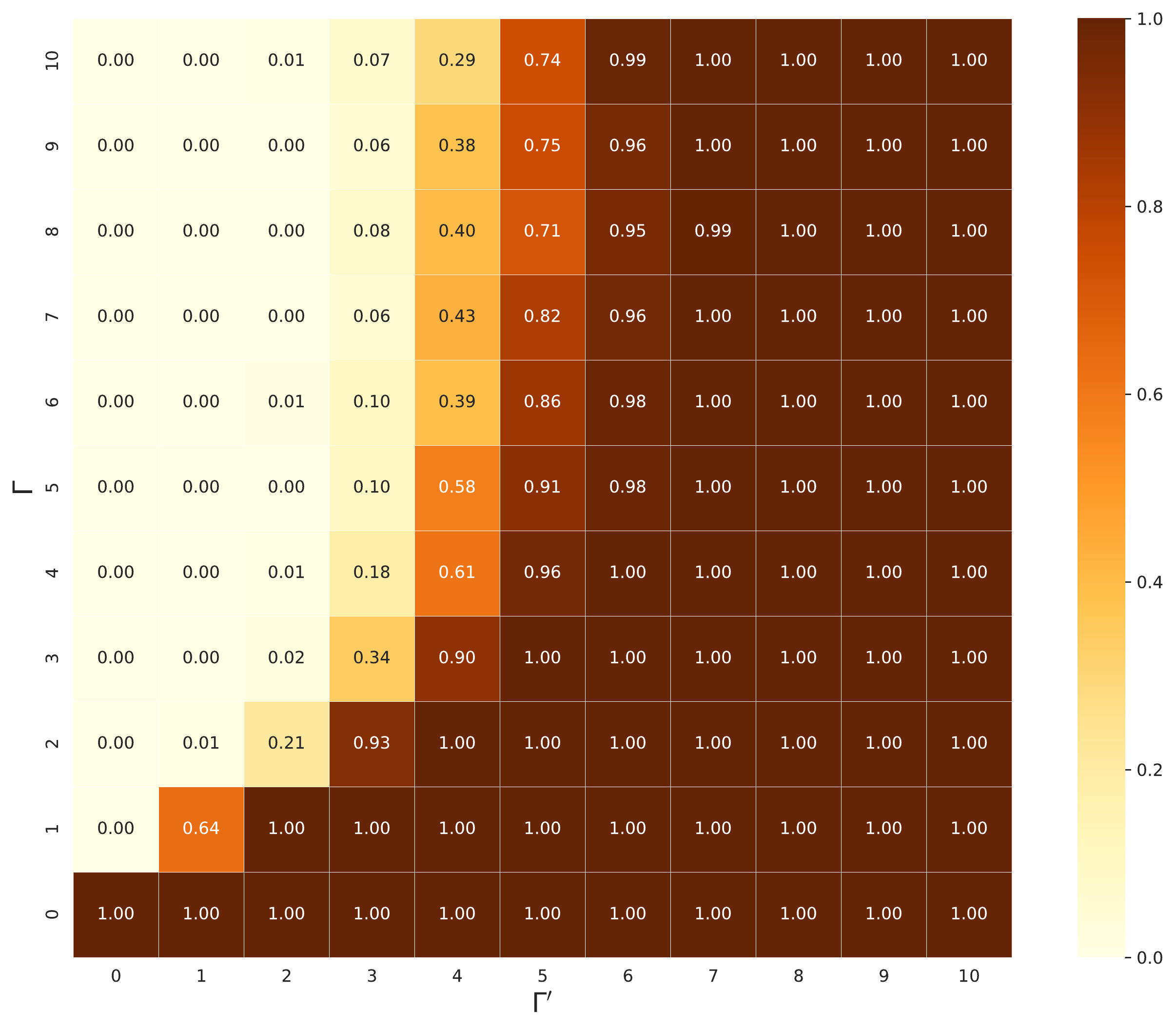}

\caption{Median runtime of the iterative method on knapsack instances with $n=40$ and various $\Gamma$ and $\Gamma'$ (left) and the percentage of instances with objective value $0$ (right).\label{Fig::KnapsackN40}}
\end{figure}

In the next experiment we investigate how the iterative method scales for increasing $n$ on each $200$ instances with $\Gamma=2$ and $\Gamma'=1$ and also $\Gamma=\frac{n}{5}$ and $\Gamma'=\frac{n}{10}$. Note that in the latter setting, slightly changing  the ratio between $\Gamma$, $\Gamma'$ and $n$ can have a considerable impact on the results (cf. Figure~\ref{Fig::KnapsackN40}(left)). 
\begin{table}[h!]
\centering
\footnotesize
\caption{Data on knapsack instances solved using the iterative method. 
\label{Tab::KnapsackIter}}
\begin{tabular}{rrrrrrrrrrrrrr}
$\Gamma$&$\Gamma'$&$n$&20&30&40&50&60&70&80&90&100\\\midrule
\multirow{3}{*}{2}&\multirow{3}{*}{1}&Median & 0.2  & 0.8  & 2.9   & 8.8    & 23.3   & 54.3   & 100.3  & 192.2  & 534.8  \\
&&Mean & 0.3  & 1.2  & 4.9   & 14.9   & 47.8   & 110.1  & 199.7  & 390.2  & 769.0  \\
&&Solved& 1.00 & 1.00 & 1.00  & 1.00   & 1.00   & 1.00   & 1.00   & 0.94   & 0.81   
\\
&&zero& 17	 &3	&0	&0	&0	&0	&0	&0	&0	\\
&& non-zero &183	&197&	200	&200	&200&	200	&200	&188&	162
\\\midrule
\multirow{5}{*}{$\frac{n}{5}$}&\multirow{5}{*}{$\frac{n}{10}$}&Median   & 0.3  & 3.5  & 169.8 & 1800.0 & 1800.0 & 1800.0 & 1800.0 & 1800.0 & 1800.0 \\
&&Mean    & 0.7  & 40.6 & 632.3 & 1053.3 & 1141.3 & 1195.4 & 1142.0 & 1273.2 & 1167.9 \\
&&Solved  & 1.00 & 1.00 & 0.77  & 0.44   & 0.39   & 0.37   & 0.42   & 0.34   & 0.39 \\
&&  zero &56&	72&	65	&79	&78&	74&	83&	68	&78\\
&& non-zero &144&	128	&88	&8&	0	&0	&0&	0	&0 
\end{tabular}
\end{table}
In Table \ref{Tab::KnapsackIter} the median and mean runtimes are shown, as well as the percentage of solved instances.  We additionally show how many of the instances solved within the timelimit have a  non-zero and zero objective value.  For $\Gamma$ and $\Gamma'$ proportionally growing with $n$ the runtimes as well as the percentage of solved instances quickly worsen compared to the instances with constant $\Gamma$ and $\Gamma'$. However, for constant $\Gamma$ only few instances (with $n=20$ and $n=30$) have non-zero solutions. For $\Gamma=\frac{n}{5}$ several solved instances with $n\leq 50$ have a non-zero objective, but for $n\geq 60$ all solved instances have an objective value of zero. Comparing this to the experiments on the selection problem it is probable that at least some of the instances that were not solved have non-zero objective values and are harder to solve via the iterative method, as already suggested by Figure \ref{Fig::KnapsackN40}.

Similar to the experiments for the selection problem we want to view the solutions in the context of other  optimization approaches. For $n=40$ and $\Gamma=6$ the relative differences of the solutions with respect to the different evaluation criteria are shown in Table~\ref{Tab::QualityKnapsack}. Again, for increasing $\Gamma'$, the \RR{$\Gamma'$} solution converges to an optimal \WCI solution. Figure~\ref{Fig::ScatterKnapsack} illustrates that also for the knapsack problem balanced regret constitutes a trade-off between \WCI and \RegretGamma.

\begin{table}[h!]
\centering
\footnotesize
\caption{Average relative difference of the different optimization approaches on  
knapsack instances 
compared to the optimal solutions of \BC \WCI, \WCGamma, \RegretI and \RegretGamma.\label{Tab::QualityKnapsack}}
\begin{tabular}{llllll}
              & \BC    & \WCI & \WCGamma & \RegretI & \RegretGamma \\\midrule
\BC            &  0.000 & 0.020 & 0.007 & 0.169 & 0.133 \\
\WCI         &0.010 & 0.000 & 0.009 & 0.055 & 0.169 \\
\WCGamma       &0.004 & 0.009 & 0.000 & 0.071 & 0.028 \\
\RegretI     &0.005 & 0.003 & 0.004 & 0.000 & 0.045 \\
\RegretGamma &0.003 & 0.007 & 0.001 & 0.036 & 0.000 \\
\RR{1}         &0.003 & 0.007 & 0.001 & 0.036 & 0.004 \\
\RR{2}         &0.004 & 0.006 & 0.001 & 0.032 & 0.010 \\
\RR{3}       &0.005 & 0.004 & 0.002 & 0.035 & 0.035 \\
\RR{4}        &0.008 & 0.001 & 0.005 & 0.033 & 0.100 \\
\RR{5}         &0.009 & 0.000 & 0.008 & 0.046 & 0.145 
\end{tabular}
\end{table}

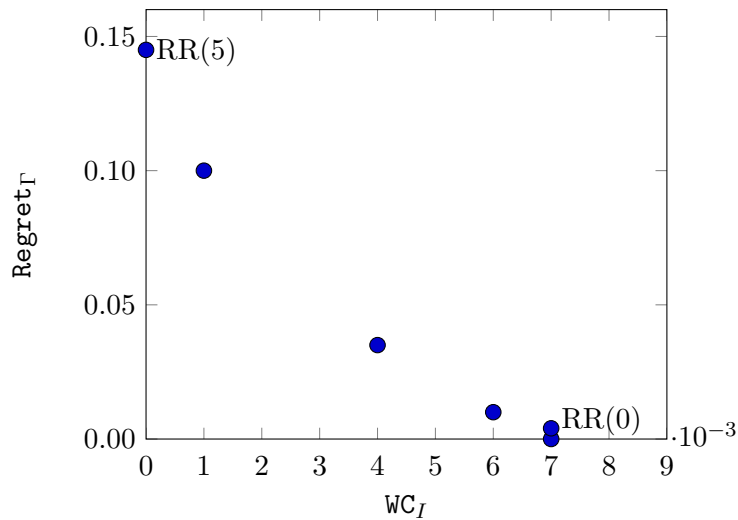
\begin{figure}[h!]
\centering
\begin{tikzpicture}

\begin{axis}[
  ylabel = {\RegretGamma},
    ylabel shift = -3 pt,
    xlabel = \WCI,
    ymin=0,
    ymax=0.16,
    xmax=0.009,
        enlargelimits=false,
        ylabel shift = 10 pt,
            xtick distance={0.001},
    ytick distance={0.05},
        y tick label style={
        /pgf/number format/.cd,
            fixed,
            fixed zerofill,
            precision=2,
        /tikz/.cd,
    },
    x tick label style={
        /pgf/number format/.cd,
            fixed,
            fixed zerofill,
            precision=0,
        /tikz/.cd,
              below=1mm
    },
    every x tick scale label/.style={
    at={(1,0)},xshift=1pt,anchor=south west,inner sep=0pt
}
]
\addplot+[
    only marks,
    color=black,
    scatter,
    mark=*,
    mark size=2.9pt, 
    point meta=explicit symbolic,
    visualization depends on=\thisrow{alignment} \as \alignment,
    nodes near coords,
     every node near coord/.style={anchor=\alignment}]
table[meta=pin]
{knapsackScatter.dat};
\end{axis}
\end{tikzpicture}
\caption{Evaluation of the \RR{$\Gamma'$} knapsack solutions with respect to \WCI and \RegretGamma.
 \label{Fig::ScatterKnapsack}}
\end{figure}

\subsection{Shortest Path}
For a third experiment we consider the shortest path problem $\mathcal{X}=\{\pmb{x} \in \{0,1\}^E \mid  \sum_{e \in \delta^-(v)} x_e - \sum_{e \in \delta^+(v)} x_e = \mathds{1}_t(v)-\mathds{1}_s(v)\ \forall v\in V\}$ with underlying graph $G=(V,E)$, $\mathds{1}$ being the indicator function and $\delta^-(v)$ and $\delta^+(v)$ the set of ingoing and outgoing arcs of node $v$, respectively. We use real world data introduced in \cite{chassein2019algorithms} and provided by the city of Chicago with a graph containing 538 nodes and 1308 arcs.  
4363 scenarios are obtained containing the traversal times for arcs. For each arc $e$ we use the first decile as the nominal travel time $\hat{c}_e$ and by calculating the difference to the last decile we obtained the additional travel time $d_e$. We generated 200 random $(s,t)$ pairs and this way obtained $200$ shortest path instances.

Using only the iterative solution method, for $\Gamma,\Gamma' \in \{0,1,\ldots,10\}$ the median runtimes are illustrated in Figure~\ref{Fig::SPN50} (left). Additionally, in Figure~\ref{Fig::SPN50} (right), we show the percentage of solutions having an objective value of $0$. While the overall runtimes are rather small, they tend to be higher for larger $\Gamma$ with small $\Gamma'<\Gamma$. This coincides with the results obtained for the knapsack instances. However, unlike in the experiments on selection and knapsack instances, here the boundary where (almost) all instances have an objective value of zero proceeds more strictly at $\Gamma=\Gamma'-1$.
\begin{figure}[h!]
\centering
\includegraphics[width=.49\textwidth]{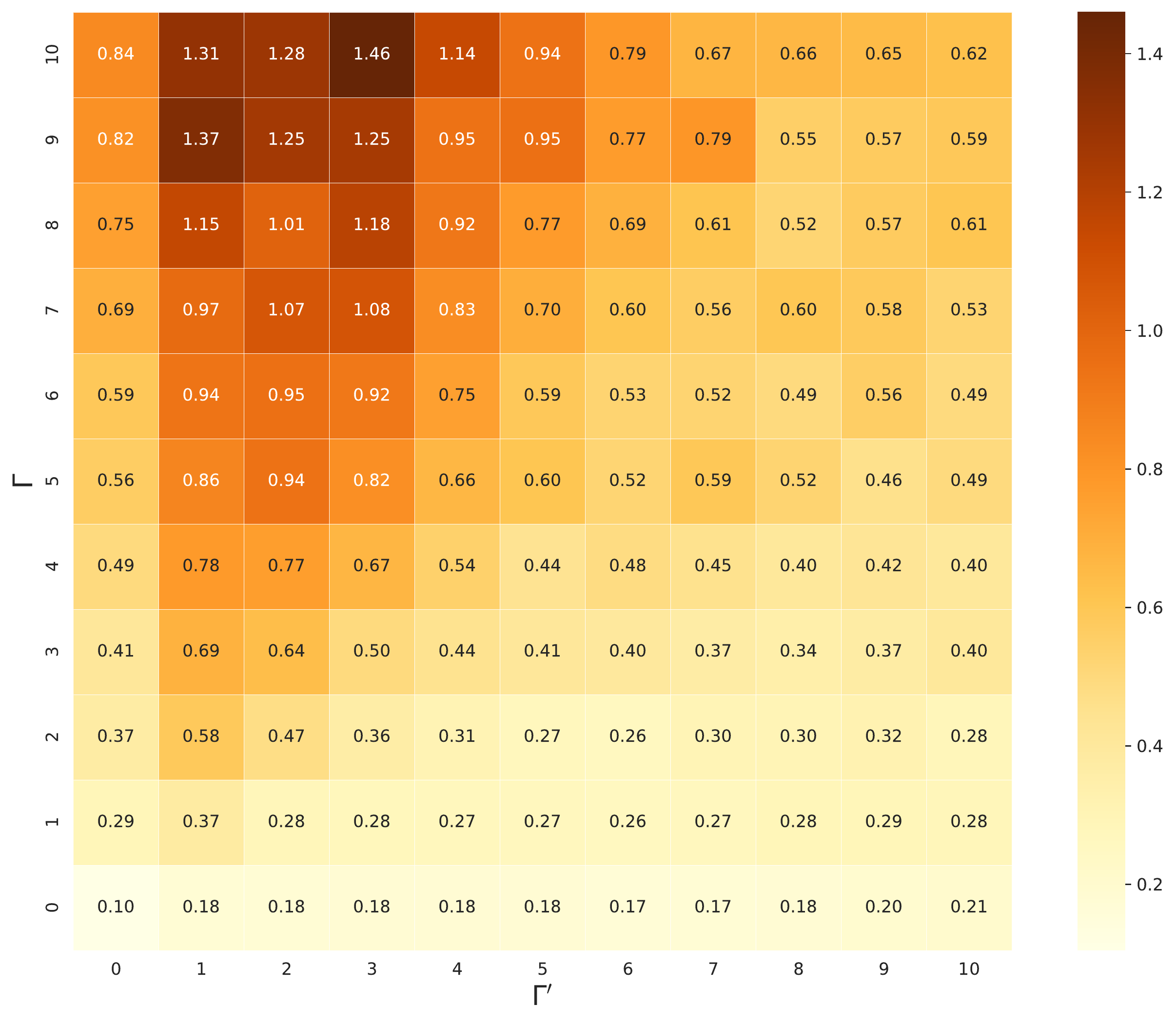} \includegraphics[width=.49\textwidth]{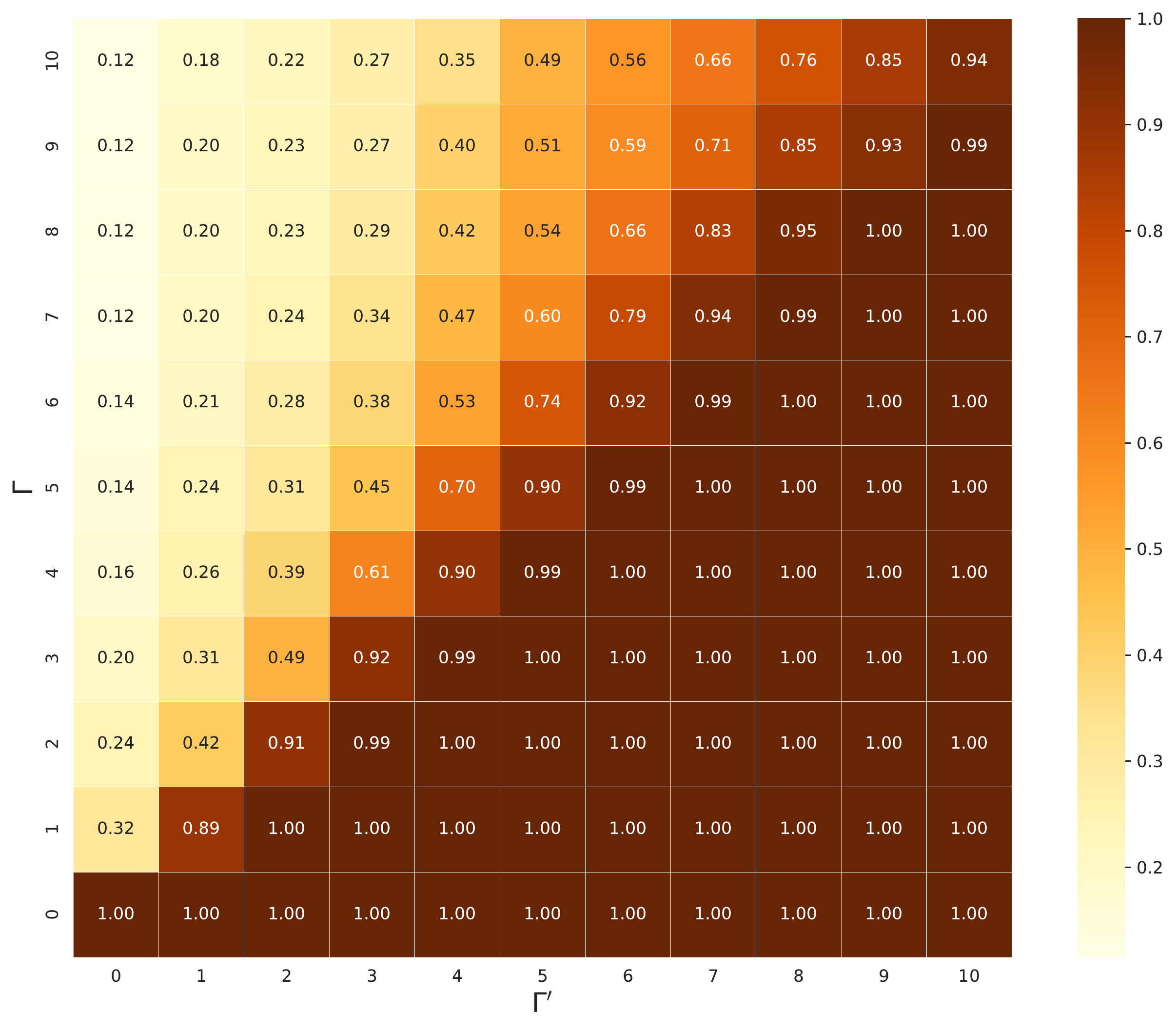}

\caption{Median runtime of the iterative method on shortest path instances from the Chicago testset with various $\Gamma$ and $\Gamma'$ (left) and the percentage of instances with objective value $0$ (right).\label{Fig::SPN50}}
\end{figure}

Furthermore, for $\Gamma=8$ we examine relative differences of the optimal balanced regret solutions with $\Gamma'\in \{1,\ldots,8\}$ with respect to the evaluation criteria. The results are shown in Table \ref{Tab::QualitySP}. Similar to the other experiments, the trend of \RR{$\Gamma'$} becoming a \WCI optimal solution for increasing $\Gamma'$ can be observed. Figure \ref{Fig::ScatterSP} shows the trade-off between \WCI and \RegretGamma. 
\begin{table}[h!]
\centering
\footnotesize
\caption{Average relative difference of the different optimization approaches on  shortest path instances compared to the optimal solutions of \BC \WCI, \WCGamma, \RegretI and \RegretGamma.\label{Tab::QualitySP}}

\begin{tabular}{llllll}
          & \BC    & \WCI & \WCGamma & \RegretI & \RegretGamma \\
          \midrule
\BC &0.000 & 0.024 & 0.011 & 0.101 & 0.064 \\
\WCI &0.035 & 0.000 & 0.009 & 0.036 & 0.095 \\
\WCGamma &0.019 & 0.011 & 0.000 & 0.060 & 0.030 \\
\RegretI &0.027 & 0.006 & 0.011 & 0.000 & 0.047 \\
\RegretGamma&0.014 & 0.012 & 0.003 & 0.030 & 0.000 \\
\RR{1}&0.016 & 0.010 & 0.002 & 0.031 & 0.006 \\
\RR{2}&0.018 & 0.009 & 0.002 & 0.037 & 0.015 \\
\RR{3}&0.020 & 0.008 & 0.001 & 0.036 & 0.019 \\
\RR{4}&0.020 & 0.007 & 0.001 & 0.036 & 0.021 \\
\RR{5}&0.022 & 0.006 & 0.001 & 0.042 & 0.029 \\
\RR{6}&0.024 & 0.005 & 0.002 & 0.044 & 0.037 \\
\RR{7}&0.026 & 0.004 & 0.002 & 0.044 & 0.040 \\
\RR{8}&0.027 & 0.004 & 0.002 & 0.044 & 0.042
\end{tabular}
\end{table}

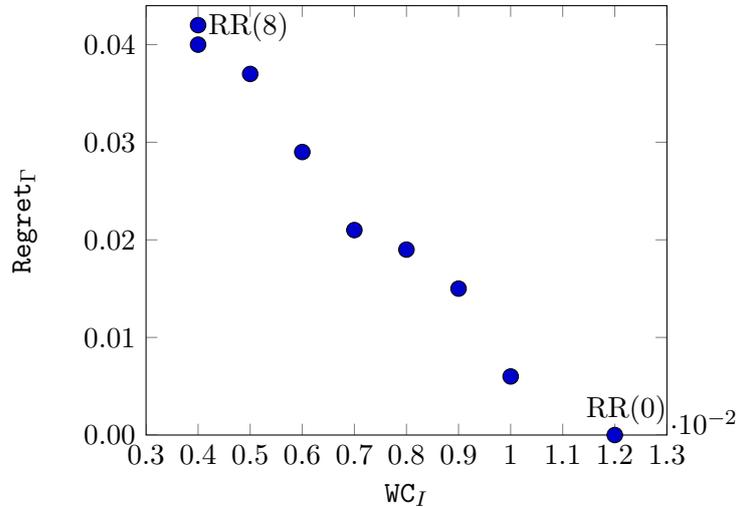
\begin{figure}[h!]
\centering
\begin{tikzpicture}

\begin{axis}[
  ylabel = {\RegretGamma},
    ylabel shift = -3 pt,
    xlabel = \WCI,
    ymin=0,
    ymax=0.044,
    xmax=0.013,
    xmin=0.003,
        enlargelimits=false,
        ylabel shift = 10 pt,
            xtick distance={0.001},
    ytick distance={0.01},
        y tick label style={
        /pgf/number format/.cd,
            fixed,
            fixed zerofill,
            precision=2,
        /tikz/.cd,
    },   
       scaled y ticks=false,
     every x tick scale label/.style={
    at={(1,0)},xshift=1pt,anchor=south west,inner sep=0pt
}
]
\addplot+[
    only marks,
    color=black,
    scatter,
    mark=*,
    mark size=2.9pt, 
    point meta=explicit symbolic,
    visualization depends on=\thisrow{alignment} \as \alignment,
    nodes near coords,
     every node near coord/.style={anchor=\alignment}]
table[meta=pin]
{shortestpathScatter.dat};
\end{axis}
\end{tikzpicture}
\caption{Evaluation of the \RR{$\Gamma'$} shortest path solutions with respect to \WCI and  \RegretGamma. 
 \label{Fig::ScatterSP}}
\end{figure}

\section{Conclusions}
\label{sec:conclusions}

Decision making under uncertainty is ubiquitous. While there is no ``perfect'' decision criterion that fulfills a complete set of reasonable axioms, several such criteria have received particular attention in the research literature, including the min-max and the min-max regret approach. In the latter, we compare our decision against an omniscient adversary that already has full knowledge of what the future will bring.

In this paper we introduced a new decision making approach for budgeted uncertainty sets, called balanced regret. In this setting, we ``level the playing field'' between decision maker and adversary by considering the adversary solution as being affected by uncertainty as well. Formally, this results in an additional optimization stage, requiring us to solve min-max-min problems. We proposed general-purpose solution methods and noticed that for sufficiently large uncertainty for the adversary, the problem becomes equivalent to a simple worst-case problem. We then considered the multi-representative selection problem in more detail, showing that it is NP-hard. We derived a compact problem formulation and a dominance criterion which allows us to solve special cases in polynomial time. Furthermore, we show that the classic regret case with budgeted uncertainty can be solved in polynomial time.

In computational experiments using three types of combinatorial optimization problems under randomly generated and real-world data, we analyzed our approach in more detail. Comparing solution methods, we showed that the compact formulation is the strongest approach in case of the selection problem, while the iterative method performs best for other problems, where no such formulation is available. We compare the balanced regret solution to solutions found by other decision making criteria and found that it provides a useful trade-off between the worst-case solution (with respect to interval uncertainty) and the min-max regret solution (with respect to budgeted uncertainty), thus providing more practical choices for a decision maker.

In further research we will analyze the approximability of this problem setting and derive heuristic solution methods for larger instances.

\newcommand{\etalchar}[1]{$^{#1}$}

\end{document}